\newtheorem{thm}{Theorem}[section]
\newtheorem{cor}[thm]{Corollary}
\newtheorem{lem}[thm]{Lemma}
\newtheorem{prop}[thm]{Proposition}
\newtheorem{claim}{Claim}
\def\vertex(#1){\put(#1){\circle*{2}}}
\def\vertexo(#1){\put(#1){\circle{2}}}
\def\vert(#1){\put(#1){\circle*{1.5}}}
\def\verto(#1){\put(#1){\circle{1.5}}}
\def\lab(#1)#2{\put(#1){\makebox(0,0)[c]{#2}}}
\newenvironment{proof}[1][Proof]{\textbf{#1.} }{\ \rule{0.5em}{0.5em}}
\newcommand{\cart}{\, \Box \,}
\newcommand{\gt}{\gamma_t}
\newcommand{\rk}{\gamma_{{\rm r}k}}
\newcommand{\rd}{\gamma_{{\rm r}2}}
\newcommand{\lex}{\,\circ\,}
\begin{document}

\title{Rainbow domination in the lexicographic product of graphs}

\author{
Tadeja Kraner
\v{S}umenjak \footnote{ FKBV, University of Maribor, Maribor,
Slovenia. The author is also with the Institute  of Mathematics, Physics and
Mechanics, Jadranska 19, 1000 Ljubljana.  email: tadeja.kraner@uni-mb.si}
\and
Douglas F. Rall \footnote{ Herman N. Hipp Professor of Mathematics,
Furman University, Greenville, SC, USA. e-mail:
doug.rall@furman.edu}
\thanks{Supported by a grant from the Simons Foundation
(\# 209654 to Douglas F. Rall). Part of the research done
during a sabbatical visit at the University of Maribor.} \and
Aleksandra Tepeh\footnote{ FEECS, University of Maribor, Maribor,
Slovenia.  The author is also with the Institute  of Mathematics, Physics and
Mechanics, Jadranska 19, 1000 Ljubljana.  email: aleksandra.tepeh@uni-mb.si }  }

\date{}
\maketitle

\begin{abstract}
A \emph{$k$-rainbow dominating function} of a graph $G$ is a map $f$ from
$V(G)$ to the set of all subsets of $\{1,2,\ldots,k\}$ such that
$\{1,\ldots,k\}=\bigcup_{u\in N(v)} f(u)$ whenever $v$ is a vertex
with $f(v)=\emptyset$. The \emph{$k$-rainbow domination number} of $G$ is
the invariant $\rk(G)$, which is the minimum sum (over all the
vertices of $G$) of the cardinalities of the subsets assigned by a
$k$-rainbow dominating function. We focus on the $2$-rainbow
domination number of the lexicographic product of graphs and prove
sharp lower and upper bounds for this number.  In fact, we prove the
exact value of $\rd(G \lex H)$ in terms of domination invariants of
$G$ except for the case when $\rd(H)=3$ and there exists a minimum
$2$-rainbow dominating function of $H$ such that there is a vertex in
$H$ with the label $\{1,2\}$.
\end{abstract}

{\small {\bf Keywords:} domination, total domination, rainbow domination, lexicographic product} \\
\indent {\small {\bf AMS subject classification: 05C69}}

\section{Introduction}

When a graph is used to model  locations or objects  which can
exchange some resource along its edges, the study of ordinary
domination is an optimization problem to determine the minimum
number of locations to store the resource in such a way that each
location either has the resource or is adjacent to one where the
resource resides.  Imagine a computer network in which some of the
computers will be servers and the others clients. There are $k$
distinct resources, and we wish to determine the optimum set of
servers each hosting a non-empty subset of the resources so that any
client (i.e., any computer on the network that is not a server) is
directly connected to a subset of servers that together contain all
$k$ resources. Assuming all resources have the same cost, we seek to
minimize the total number of copies of the $k$ resources. This model
leads naturally to the notion of $k$-rainbow domination.

In general we follow the notation and graph theory terminology
in~\cite{hik-2011}. Specifically, let $G$ be a finite, simple graph
with vertex set $V(G)$ and edge set $E(G)$.
For any vertex $g$ in $G$, the {\em open neighborhood of
$g$}, written $N(g)$, is the set of vertices adjacent to $g$.  The
{\em closed neighborhood} of $g$ is the set $N[g] = N(g) \cup
\{g\}$. If $A\subset V(G)$, then $N(A)$ (respectively, $N[A]$) denotes
the union of open (closed) neighborhoods of all vertices of $A$. (In
the event that the graph $G$ under consideration is not clear we
write $N_G(g)$, and so on.)  Whenever $N[A]=V(G)$ we call $A$ a {\em
dominating set} of $G$.  The {\em domination number} of $G$, denoted
by $\gamma(G)$, is the minimum cardinality of a dominating set of
$G$.  If $G$ has no isolated vertices, then the {\em total
domination number}, $\gamma_t(G)$, is the minimum cardinality of a
{\em total dominating set} in
 $G$ (that is, a subset $S \subseteq V(G)$ such that $N(S)=V(G)$).  It is
clear that $\gamma(G) \le \gamma_t(G) \le 2\gamma(G)$ when
$\gamma_t(G)$ is defined. The {\em maximum degree} of a graph $G$ is
denoted by $\Delta (G)$.

For graphs $G$ and $H$, the {\em Cartesian product} $G \cart H$ is
the graph with vertex set $V(G) \times V(H)$ where two vertices
$(g_1,h_1)$ and $(g_2,h_2)$ are adjacent if and only if either $g_1
= g_2$ and $h_1h_2 \in E(H)$, or $h_1 = h_2$ and $g_1g_2 \in E(G)$.
The {\em lexicographic product} of $G$ and $H$ is the graph $G \circ
H$ with vertex set $V(G) \times V(H)$.  In $G \circ H$ two vertices
$(g_1,h_1)$ and $(g_2,h_2)$ are adjacent if and only if either
$g_1g_2 \in E(G)$, or $g_1 = g_2$ and $h_1h_2 \in E(H)$.   We use
$\pi_G$ to denote the projection map from $G\circ H$ onto $G$
defined by $\pi_G(g,h)=g$.  The projection map $\pi_H$ onto $H$ is
defined in an analogous way.

Fix a vertex $g$ of $G$.  The subgraph of $G \circ H$  induced by
$\{(g,h)\,:\,h \in V(H)\}$ is called an {\em $H$-layer} and is
denoted $^g\!H$.  If $h \in V(H)$ is fixed, then $G\,^h$, the subgraph
induced by $\{(g,h)\,:\, g \in V(G)\}$, is a {\em $G$-layer}.  Note
that every $G$-layer of $G \circ H$ is isomorphic to $G$ and every
$H$-layer of $G\circ H$ is isomorphic to $H$.  It is also helpful to
remember that if $g_1$ and $g_2$ are adjacent in $G$, then the
subgraph of $G\circ H$ induced by $^{g_1}\!H \cup\, ^{g_2}\!H$ is
isomorphic to the join of two disjoint copies of $H$.

For a positive integer $k$ we denote the set $\{1,2,\ldots, k\}$ by
$[k]$. The {\em power set} (that is, the set of all subsets) of
$[k]$ is denoted by $2^{[k]}$. Let $G$ be a graph and let $f$ be a
function that assigns to each vertex a subset of integers chosen
from the set $[k]$; that is, $f \colon V(G) \rightarrow 2^{[k]}$.
The {\em weight}, $\|f\|$, of  $f$ is defined as $\|f\| = \sum_{v\in
V(G)} |f(v)|$. The function $f$ is called a {\em $k$-rainbow
dominating function} of $G$ if for each vertex $v\in V(G)$ such that
$f(v)=\emptyset$ it is the case that
\[
\bigcup_{u\in N(v)} f(u) = \{1,\ldots,k\}\,.
\]

Given a graph $G$, the minimum
weight of a $k$-rainbow dominating function is called the {\em
$k$-rainbow domination number of $G$}, which we denote by $\rk(G)$.

The notion of $k$-rainbow domination in a graph $G$ is equivalent to
domination of the Cartesian product $G \cart K_k$.  There is a
natural bijection between the set of $k$-rainbow dominating
functions of $G$ and the dominating sets of $G \cart K_k$.  Indeed,
if the vertex set of $K_k$ is $[k]$ and $f$ is a $k$-rainbow
dominating function of $G$, then the set
\[
D_f = \bigcup_{v \in V(G)} \Bigl( \bigcup_{i\in f(v)} {\{(v,i)\}}
\Bigr),
\]
 is a dominating set of $G \cart K_k$.  By reversing this one easily sees
 how to complete the one-to-one correspondence.
This proves the following result from~\cite{bhr-2008} where the
concept of rainbow domination was introduced.

\begin{prop}[\rm \cite{bhr-2008}]
\label{rain1} For $k \ge 1$ and for every graph $G$,
$\rk(G)=\gamma(G \cart K_k)$.
\end{prop}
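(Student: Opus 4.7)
The plan is to make the correspondence sketched in the text completely explicit and verify that it is weight-preserving in both directions, so the minima on the two sides must agree. First I would identify the vertex set of $K_k$ with $[k]$, and for a $k$-rainbow dominating function $f$ of $G$ define
\[
D_f=\{(v,i) : v\in V(G),\ i\in f(v)\}\subseteq V(G\cart K_k)\,,
\]
and for a set $D\subseteq V(G\cart K_k)$ define $f_D\colon V(G)\to 2^{[k]}$ by $f_D(v)=\{i\in [k] : (v,i)\in D\}$. These two assignments are inverse to each other, and an immediate count gives $|D_f|=\sum_{v\in V(G)}|f(v)|=\|f\|$ and $\|f_D\|=|D|$.

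Next I would verify that the correspondence swaps ``$k$-rainbow dominating function of $G$'' with ``dominating set of $G\cart K_k$''. The key observation is the structure of neighborhoods in $G\cart K_k$: the neighbors of $(v,j)$ are precisely the vertices $(v,i)$ with $i\neq j$, together with the vertices $(u,j)$ for $u\in N_G(v)$. Hence $(v,j)\notin D$ is dominated by $D$ if and only if either $f_D(v)\setminus\{j\}\neq\emptyset$, or there exists $u\in N_G(v)$ with $j\in f_D(u)$.

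With this in hand the equivalence is a short case analysis on $f(v)$. If $f(v)\neq\emptyset$, then for every $j\notin f(v)$ the vertex $(v,j)$ is automatically dominated by any $(v,i)$ with $i\in f(v)$, so the condition on vertices of the layer $^vK_k$ is free. If $f(v)=\emptyset$, then every vertex $(v,j)$ of $^vK_k$ must be dominated by some $(u,j)$ with $u\in N_G(v)$, which says exactly that $j\in\bigcup_{u\in N(v)}f(u)$; requiring this for all $j\in [k]$ is the rainbow condition $\bigcup_{u\in N(v)}f(u)=[k]$. Reading this equivalence in both directions shows that $f$ is a $k$-rainbow dominating function of $G$ if and only if $D_f$ is a dominating set of $G\cart K_k$.

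Because the bijection $f\leftrightarrow D_f$ preserves weight/cardinality, the minima coincide, giving $\rk(G)=\gamma(G\cart K_k)$. There is really no serious obstacle here: the whole content is the description of the neighborhoods in the Cartesian product and the observation that the $K_k$-layer above a vertex $v$ is dominated ``for free'' as soon as $f(v)\neq\emptyset$, while if $f(v)=\emptyset$ each color must be supplied by a neighbor of $v$ in $G$. I would just take care to state the bijection in both directions so that the argument is symmetric and the equality (not merely two inequalities) is transparent.
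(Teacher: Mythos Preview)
Your proposal is correct and follows exactly the approach the paper itself outlines in the paragraph preceding the proposition: the bijection $f\leftrightarrow D_f$ between $k$-rainbow dominating functions of $G$ and dominating sets of $G\cart K_k$, together with the observation that it preserves weight. You have simply made explicit the neighborhood analysis and case check that the paper leaves to the reader with the phrase ``By reversing this one easily sees how to complete the one-to-one correspondence.''
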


Earlier, Hartnell and Rall had investigated $\gamma(G \cart K_k)$.
See \cite{hr-2004}.  The main focus for them was properties shared
by graphs $G$ for which  $\rd(G)=\gamma(G)$.  In particular, they
proved that for any tree $T$, $\gamma(T) < \rd(T)$. In addition,
they proved a lower bound for $\rk(G)$ that implies $\gamma(G) <
\rk(G)$ for every graph $G$ whenever $k \ge 3$. Expressed in terms
of rainbow domination their result yields the following sharp
bounds.

\begin{thm} [\rm \cite{hr-2004}]
If $G$ is any graph and $k \ge 2$, then
$$\min\{|V(G)|,\gamma(G)+k-2\} \leq \rk(G) \leq k\gamma(G)\,.$$ \label{trivialbds}
\end{thm}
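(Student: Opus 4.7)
My plan for the upper bound $\rk(G)\le k\gamma(G)$ is immediate: for any minimum dominating set $D\subseteq V(G)$, setting $f(v)=[k]$ for $v\in D$ and $f(v)=\emptyset$ otherwise gives a $k$-rainbow dominating function of weight $k|D|$.

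For the lower bound I would use Proposition~\ref{rain1} to write $\rk(G)=\gamma(G\cart K_k)$ and work with a minimum dominating set $D$ of $G\cart K_k$. Let $S=\pi_G(D)$. A short check shows $S$ is a dominating set of $G$: any $v\in V(G)\setminus S$ has each $(v,i)$ dominated by some $(u,i)\in D$ with $u\in N_G(v)\cap S$. In particular $|S|\ge\gamma(G)$, and if $S=V(G)$ we are done since $|D|\ge|S|=|V(G)|$. Otherwise I fix $v^*\in V(G)\setminus S$, pick $u_i\in N_G(v^*)$ with $(u_i,i)\in D$ for each $i\in[k]$, and set $N^*=\{u_1,\dots,u_k\}\subseteq N_G(v^*)\cap S$. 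Writing $m_u=|\{i:(u,i)\in D\}|$, the condition $\sum_{u\in N^*} m_u\ge k$ gives the basic estimate
$$|D| \;=\; \sum_{u\in S} m_u \;\ge\; k + |S| - |N^*|.$$

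To finish I would establish $|D|\ge\gamma(G)+k-2$ via a dichotomy on the ``bad'' set
$$B \;=\; \{w\in V(G)\setminus S : w\ne v^*,\ v^*\notin N_G(w),\ N_G(w)\cap S\subseteq N^*\}.$$
Case (a): there exists $u^*\in N^*$ with $u^*\in N_G(w)$ for every $w\in B$ (vacuously if $B=\emptyset$). Then $(S\setminus N^*)\cup\{v^*,u^*\}$ dominates $G$: any vertex $x$ outside this set is either in $N^*\setminus\{u^*\}$ (dominated by $v^*$), in $V(G)\setminus S$ with a neighbor in $S\setminus N^*$ (dominated there), in $V(G)\setminus S$ adjacent to $v^*$ (dominated by $v^*$), or in $B$ (dominated by $u^*$). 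This dominating set has size $|S|-|N^*|+2$, so $|S|\ge\gamma(G)+|N^*|-2$, and plugging into the basic estimate yields $|D|\ge\gamma(G)+k-2$. Case (b): no such common $u^*$ exists. For each $i\in[k]$ set $\Phi_i=\{u\in N^*:(u,i)\in D\}$; since $(w,i)$ for $w\in B$ is dominated in $G\cart K_k$ only by some $(u,i)\in D$ with $u\in N_G(w)\cap N^*$, the set $\Phi_i$ must meet $N_G(w)\cap N^*$ for every $w\in B$. If $\Phi_i$ were a singleton, its element would be a common $u^*$, contradicting (b); hence $|\Phi_i|\ge 2$ and $\sum_{u\in N^*} m_u=\sum_i|\Phi_i|\ge 2k$, which yields $|D|\ge 2k+|S|-|N^*|\ge\gamma(G)+k$, even stronger than needed.

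The delicate point is choosing the right definition of ``bad'' vertex so that the two cases exhaust all possibilities: case (a) extracts a smaller dominating set of $G$, while case (b) forces doubly heavy $D$-mass on the $N^*$-columns. Once the dichotomy is set up, the only real bookkeeping is the case analysis verifying that $(S\setminus N^*)\cup\{v^*,u^*\}$ is genuinely dominating in case (a); everything else is arithmetic.
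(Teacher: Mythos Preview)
The paper does not supply its own proof of this theorem; it is quoted from \cite{hr-2004} and no argument is given in the present paper, so there is nothing internal to compare against.

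That said, your proof is correct. The upper bound is the standard construction. For the lower bound your dichotomy works as stated: in case~(a) the set $(S\setminus N^*)\cup\{v^*,u^*\}$ is indeed a dominating set of $G$ (your case split on $x\notin (S\setminus N^*)\cup\{v^*,u^*\}$ is exhaustive, since such $x$ lies either in $N^*\setminus\{u^*\}$ or in $(V(G)\setminus S)\setminus\{v^*\}$, and the latter is partitioned by whether $x$ has a neighbour in $S\setminus N^*$, is adjacent to $v^*$, or belongs to $B$); in case~(b) each $\Phi_i$ is nonempty (it contains $u_i$) and cannot be a singleton, so $\sum_i|\Phi_i|\ge 2k$, and together with $|S|\ge\gamma(G)$ and $|N^*|\le k$ this gives $|D|\ge\gamma(G)+k$. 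One small point worth making explicit: when $B=\emptyset$ you may choose $u^*$ to be any element of $N^*$ (which is nonempty since $u_1\in N^*$), and the verification still goes through because no $x$ can land in the ``otherwise'' branch. The argument in \cite{hr-2004} is phrased directly in terms of dominating sets of $G\cart K_k$ and also pivots on a vertex of $G$ whose column is disjoint from the dominating set, so your approach is in the same spirit as the original, though your explicit $B$-dichotomy is a clean way to organise the endgame.
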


From the  algorithmic point of view, rainbow
domination was first studied in \cite{bhr-2008} where a linear
algorithm for determining a minimum $2$-rainbow dominating set of a
tree was presented. Bre\v{s}ar and Kraner \v{S}umenjak proved that
the $2$-rainbow domination problem is NP-complete even when
restricted to chordal graphs or bipartite graphs \cite{bks-2007}.
Both mentioned results were later generalized for the case of
$k$-rainbow domination problem by Chang, Wu and Zhu \cite{CWZ-2010}.

\section{Upper bounds in general case}

Let $f: V(G) \to 2^{[k]}$ be a $k$-rainbow dominating function of
$G$. For each $A \in 2^{[k]}$ we define $V_A$ by $V_A=\{x \in V(G)\,:\,
f(x)=A\}$ (we will write also $V_{A}^{f}$ to avoid confusion when
more functions are involved). This allows us to speak of the natural
partition induced by $f$ on $V(G)$ instead of working with the
function $f$ itself. For small values of $k$ we may, for example,
abbreviate $V_{\{1,2,3\}}$ by $V_{123}$. Thus, for $k=3$, the
partition of $V(G)$ would be
$(V_{\emptyset},V_1,V_2,V_3,V_{12},V_{13},V_{23},V_{123})$, and for
example, for convenience we say that a vertex in $V_{13}$ is labeled
$\{1,3\}$.  If this partition arises from a $3$-rainbow dominating
function $f$ of minimum weight, then
\[\gamma_{{\rm r}3}(G)=\|f\|=|V_1|+|V_2|+|V_3|+2(|V_{12}|+|V_{13}|+|V_{23}|)+3|V_{123}|\,.\]

\begin{prop} \label{prop:totalupperbound}
For any graph $H$, any graph $G$ without isolated vertices and every
positive integer $k$,
\[ \rk(G\lex H) \le k \gt(G)\,.\]
\end{prop}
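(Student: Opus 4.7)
The plan is to build a $k$-rainbow dominating function of $G\lex H$ of weight exactly $k\gt(G)$ directly from a minimum total dominating set of $G$. Since $G$ has no isolated vertices, $\gt(G)$ is defined; let $D\subseteq V(G)$ be a total dominating set of $G$ with $|D|=\gt(G)$, and fix an arbitrary vertex $h_0\in V(H)$.

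Define $f\colon V(G\lex H)\to 2^{[k]}$ by
\[
f((g,h))=\begin{cases}[k], & \text{if } g\in D \text{ and } h=h_0,\\ \emptyset, & \text{otherwise.}\end{cases}
\]
Since exactly $|D|$ vertices are labeled $[k]$ and the rest are labeled $\emptyset$, we have $\|f\|=k|D|=k\gt(G)$, giving the claimed upper bound once $f$ is shown to be a $k$-rainbow dominating function.

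To verify the rainbow condition, consider any $(g',h')\in V(G\lex H)$ with $f((g',h'))=\emptyset$. Because $D$ is a total dominating set of $G$, the vertex $g'$ (whether or not $g'\in D$) has some neighbor $g''\in D$ in $G$. By the definition of the lexicographic product, the edge $g'g''\in E(G)$ implies that $(g',h')$ is adjacent to $(g'',h_0)$ in $G\lex H$, irrespective of $h'$. Since $f((g'',h_0))=[k]$, we conclude
\[
\bigcup_{u\in N((g',h'))}f(u)=[k],
\]
so $f$ is a $k$-rainbow dominating function and $\rk(G\lex H)\le \|f\|=k\gt(G)$.

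There is essentially no serious obstacle here; the only point requiring a moment's care is the observation that when $g'\in D$ but $h'\neq h_0$, the vertex $(g',h')$ still receives label $\emptyset$ and must be rainbow-dominated, and this is handled uniformly by using that $D$ is \emph{total} dominating rather than merely dominating, so that even vertices of $D$ have a neighbor inside $D$ that contributes a full label from a different $H$-layer.
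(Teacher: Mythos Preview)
Your proof is correct and follows exactly the same construction as the paper: place the label $[k]$ on the vertices $\{(g,h_0):g\in D\}$ for a minimum total dominating set $D$ of $G$ and a fixed $h_0\in V(H)$. Your write-up is in fact more explicit than the paper's, which simply asserts that the resulting $f$ is a $k$-rainbow dominating function without spelling out the verification.
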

\begin{proof}
Fix a vertex $h$ in $H$ and a minimum total dominating set $D$ of $G$.
Define $f : V(G\lex H) \to 2^{[k]}$
by $f((g,x))=[k]$ if $g\in D$ and $x=h$.  Otherwise,
$f((g,x))=\emptyset$. Clearly, $f$ is a $k$-rainbow dominating
function of $G\lex H$ and $\|f\|=k \gt(G)$.  Therefore, $\rk(G\lex
H)\le k\gt(G)$.
\end{proof}

The upper bound from Proposition~\ref{prop:totalupperbound} can be
improved if $H$ has domination number 1.  If $D$ is a minimum
dominating set of $G$ and $h$ is a vertex that dominates all of $H$,
then the partition $V_{[k]}=\{(u,h)\,:\, u \in D\}$ and
$V_{\emptyset}=V(G\lex H)\setminus V_{[k]}$ verifies this improved bound.
\begin{prop}\label{prop:gammaH1}
If  $G$ is any graph without isolated vertices and $H$ is a graph such
that $\gamma(H)=1$, then $\rk(G\lex H)\le k\gamma(G)$.
\end{prop}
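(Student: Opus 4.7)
The plan is to realize the sketch offered in the paragraph preceding the proposition and verify that the indicated assignment is a valid $k$-rainbow dominating function. Since $\gamma(H)=1$, fix a vertex $h\in V(H)$ with $N_H[h]=V(H)$, and let $D$ be a minimum dominating set of $G$, so $|D|=\gamma(G)$. Define $f\colon V(G\lex H)\to 2^{[k]}$ by $f((g,x))=[k]$ when $g\in D$ and $x=h$, and $f((g,x))=\emptyset$ otherwise. The weight is immediate: $\|f\|=k|D|=k\gamma(G)$.

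The only thing to check is the rainbow condition for an arbitrary vertex $(g,x)$ with $f((g,x))=\emptyset$; here I would split into two cases according to whether or not $g\in D$. If $g\notin D$, then by the domination property of $D$ in $G$ there exists $u\in D$ with $ug\in E(G)$, and then $(u,h)$ is adjacent to $(g,x)$ in the lexicographic product (since $ug\in E(G)$) and carries the label $[k]$, which settles this case. If instead $g\in D$, then $x\neq h$ (else $f((g,x))\neq\emptyset$), and because $h$ dominates $H$ we have $hx\in E(H)$; therefore $(g,h)$ is adjacent to $(g,x)$ in $G\lex H$ and again supplies the full label $[k]$.

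Thus every vertex assigned $\emptyset$ by $f$ has a neighbor whose label is $[k]$, so $f$ is a $k$-rainbow dominating function of $G\lex H$ of weight $k\gamma(G)$, giving $\rk(G\lex H)\le k\gamma(G)$. There is really no serious obstacle here; the only point to be careful about is noting that when $g\in D$ the vertex $(g,x)$ is rainbow-dominated \emph{inside its own $H$-layer} by $(g,h)$, which requires the dominating vertex $h$ of $H$ to be adjacent to $x$ in $H$, and this is exactly guaranteed by $\gamma(H)=1$ (so that $x\neq h$ forces $hx\in E(H)$). The hypothesis that $G$ has no isolated vertices is not actually used in the argument, but it is retained for consistency with the ambient assumption in this section.
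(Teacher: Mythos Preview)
Your proof is correct and follows exactly the construction sketched in the paper's paragraph preceding the proposition: label $(u,h)$ by $[k]$ for $u$ in a minimum dominating set $D$ of $G$ and $h$ a universal vertex of $H$, and label everything else $\emptyset$. Your case analysis verifying the rainbow condition is precisely the intended check, and your observation that the hypothesis on isolated vertices of $G$ is not actually needed here is accurate.
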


In \cite{kpt-2011} the concept of dominating couples was introduced
that enabled the authors to establish the Roman domination number of
the lexicographic product of graphs. We can use that concept to
improve  the upper bound from Proposition \ref{prop:totalupperbound}
in the case $|V(H)|\geq k$.

We say that an ordered couple $(A,B)$ of disjoint sets $A,B
\subseteq V(G)$ is a \textit{dominating couple} of $G$ if for every
vertex $x\in V(G)\setminus B$ there exists a vertex $w \in A\cup B$,
such that $x\in N_{G}(w)$.

\begin{prop}\label{prop:upper}
If $H$ is a graph such that  $|V(H)|\geq k$  and $G$ is a non-trivial
graph, then
$$\rk(G \circ H)\leq \min \{k|A|+\rk(H)|B|: (A,B) \textit{ is a
dominating couple of } G\}.$$
\end{prop}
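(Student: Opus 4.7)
Given a dominating couple $(A,B)$ of $G$, I aim to build a $k$-rainbow dominating function of $G\lex H$ of weight exactly $k|A|+\rk(H)|B|$; minimizing over dominating couples then yields the inequality. The preparatory step is to fix a minimum $k$-rainbow dominating function $f_H$ of $H$ which \emph{uses every color in} $[k]$. Such an $f_H$ exists because $|V(H)|\ge k$: either some optimal $k$-RDF of $H$ has an empty-labeled vertex, in which case all $k$ colors already appear in its neighborhood, or every optimal $k$-RDF assigns a non-empty label to each vertex, forcing $\rk(H)=|V(H)|$; in the latter situation we can assign the distinct singletons $\{1\},\ldots,\{k\}$ to $k$ chosen vertices of $H$ and $\{1\}$ to the remaining vertices to obtain an optimal function that uses all colors.

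With $f_H$ chosen, I would define $f:V(G\lex H)\to 2^{[k]}$ by the following rule. For each $a\in A$, pick $k$ distinct vertices $h_1^a,\ldots,h_k^a$ of $H$ and set $f(a,h_i^a)=\{i\}$ for every $i\in[k]$; all other vertices of $^a\!H$ receive $\emptyset$. For each $b\in B$, set $f(b,h)=f_H(h)$ for every $h\in V(H)$. For each $v\in V(G)\setminus(A\cup B)$, every vertex of $^v\!H$ is labeled $\emptyset$. Summing over the three parts immediately gives $\|f\|=k|A|+\rk(H)|B|$, which is the target weight, so only the rainbow condition remains to be checked.

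The verification is a short case analysis on $(v,h)$ with $f(v,h)=\emptyset$. If $v\notin B$, the dominating-couple property produces some $w\in A\cup B$ with $vw\in E(G)$; since $(v,h)$ is adjacent in $G\lex H$ to every vertex of $^w\!H$, and that layer contains representatives of each color in $[k]$---by the construction if $w\in A$, and by color-completeness of $f_H$ if $w\in B$---the neighborhood of $(v,h)$ covers $[k]$. If $v\in B$, then $f_H(h)=\emptyset$ and the validity of $f_H$ in $H$ guarantees $\bigcup_{h'\in N_H(h)} f_H(h')=[k]$, so the neighbors of $(v,h)$ within $^v\!H$ already cover all of $[k]$. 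The only delicate step is the preparatory existence of $f_H$ using every color; the rest is straightforward bookkeeping.
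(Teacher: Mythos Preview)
Your proof is correct and follows essentially the same approach as the paper: build a $k$-rainbow dominating function from a dominating couple $(A,B)$ by placing total weight $k$ in each $A$-layer and a color-complete minimum $k$-RDF of $H$ in each $B$-layer. The only cosmetic difference is that the paper labels a single vertex of each $A$-layer with the full set $[k]$ rather than spreading the $k$ colors over $k$ distinct vertices, and the paper asserts the existence of the color-complete $\widehat{f}$ without the detailed justification you supply.
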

\begin{proof} Let $(A,B)$ be a dominating couple of $G$.
Let $\widehat{f}$ be a $k$-rainbow dominating function of $H$ with
$\|\widehat{f}\|=\rk(H)$ such that $\bigcup_{v \in V(H)}
\widehat{f}(v)=[k]$ ($\widehat{f}$ exists since $|V(H)|\geq k$). Fix a vertex
$h$ in $H$ and define $f : V(G\lex H) \to 2^{[k]}$ as follows: $f((g,h))=[k]$
if $g\in A$;  $f((g,x))=\emptyset$ if $g \in A$ and $x\neq h$;
$f((g,x))=\widehat{f}(x)$ if $g\in B$ and $x$ is any vertex of $H$;
$f((g,x))=\emptyset$ otherwise. Clearly, $f$ is a $k$-rainbow
dominating function of $G\lex H$. \end{proof}

One can observe that  $(A,\emptyset)$ is a dominating couple if and
only if $A$ is a total dominating set. Thus, if $|V(H)| \geq k$,
Proposition \ref{prop:totalupperbound} is a corollary of Proposition
\ref{prop:upper}.

\medskip

Consider the lexicographic product $P_7 \lex H$, where $P_7$ is a
path of order $7$ and $H$ is a graph consisting of two $4$-cycles
that have one vertex in common. In Figure \ref{potka} this product
is presented in such a way that one can comprehend which $H$-layer
corresponds to which vertex of $P_7$, but we omit edges between
$H$-layers for the reason of clarity. Proposition
\ref{prop:totalupperbound} gives the upper bound $\rd(P_7 \lex
H)\leq 8$ while using the dominating couple $(A,B)=(\{a,b\},\{c\})$
of $P_7$ and $2$-rainbow dominating function of $P_7 \lex H$ depicted
in Figure \ref{potka} we obtain $\rd(P_7 \lex H)\leq 7$ (one can
check that in fact $\rd(P_7 \lex H)=7$).

\begin{figure}[h]
\begin{center}
\includegraphics[height=7cm]{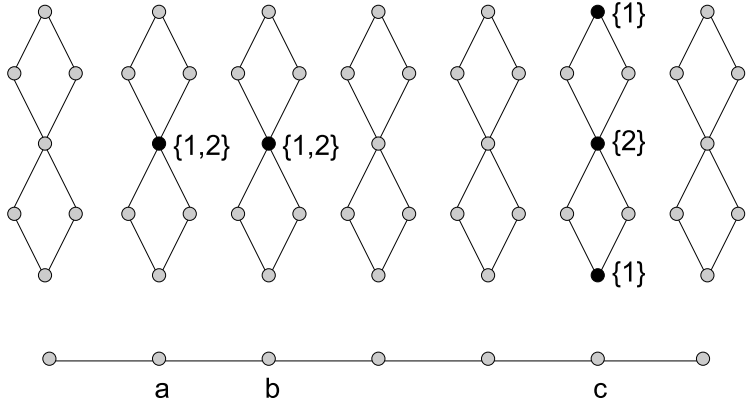}
\caption{Dominating couple of $P_7$ and $2$-rainbow dominating
function of $P_7 \lex H$.\label{potka}}
\end{center}
\end{figure}

\section{$2$-rainbow dominating number}

We now focus on the $k=2$ case and prove that $2\gamma(G)$ is a
lower bound for $\rd(G\circ H)$, unless $H$ has order 1.  If $H$ has
order at least 2 and $G=K_1$, then $\rd(G\circ H)=\rd(H)\ge 2=2\gamma(G)$.
To prove that $2\gamma(G)$ is a lower
bound also when the order of $G$ is greater than 1 we need the
following observations.

\begin{lem}\label{lem:projection1}
Let $G$ and $H$ be non-trivial, connected graphs such that $|V(H)|
\ge 3$ and suppose that $(V_{\emptyset},V_1,V_2,V_{12})$ is the
partition of $V(G\circ H)$ that arises from a $2$-rainbow dominating
function of minimum weight.  It follows that $\pi_G(V_1 \cup
V_{12})$ and $\pi_G(V_2 \cup V_{12})$ each dominate $G$.
\end{lem}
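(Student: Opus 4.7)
My plan is to argue by contradiction. By symmetry it suffices to handle $\pi_G(V_1\cup V_{12})$, so suppose some vertex $g\in V(G)$ satisfies $N_G[g]\cap\pi_G(V_1\cup V_{12})=\emptyset$; equivalently, $f((g',h))\in\{\emptyset,\{2\}\}$ for every $g'\in N_G[g]$ and every $h\in V(H)$. I would first show that ${}^g\!H$ is entirely labelled $\{2\}$: if $f((g,h))=\emptyset$ held for some $h$, then $N_{G\circ H}((g,h))\subseteq\bigcup_{g'\in N_G[g]}{}^{g'}\!H$, whose labels omit the color $1$, violating rainbow domination at $(g,h)$. So ${}^g\!H$ contributes exactly $|V(H)|$ to $\|f\|$.

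Next I would produce a $2$-rainbow dominating function $\phi$ of $H$ of weight strictly less than $|V(H)|$ that uses both colors. Since $H$ is connected with $|V(H)|\ge 3$, its maximum degree satisfies $\Delta(H)\ge 2$; for a vertex $v\in V(H)$ with $\deg_H(v)=\Delta(H)$, the assignment $\phi(v)=\{1,2\}$, $\phi(u)=\emptyset$ for $u\in N_H(v)$, and $\phi(u)=\{1\}$ for $u\notin N_H[v]$ is a valid $2$-rainbow dominating function of $H$ of weight $|V(H)|+1-\Delta(H)\le|V(H)|-1$. Because $\|\phi\|<|V(H)|$, some vertex of $H$ receives $\emptyset$, and the rainbow property of $\phi$ at that vertex forces $\bigcup_{h\in V(H)}\phi(h)=\{1,2\}$.

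Finally I would define $f'\colon V(G\circ H)\to 2^{[2]}$ by $f'((g,h))=\phi(h)$ on ${}^g\!H$ and $f'=f$ elsewhere, so that $\|f'\|\le\|f\|-1$. For validity I would examine each $v$ with $f'(v)=\emptyset$. If $v=(g,h)$ with $\phi(h)=\emptyset$, then the ${}^g\!H$-neighbors of $v$ cover $\{1,2\}$ by the rainbow property of $\phi$ in $H$. If $v\notin{}^g\!H$ with $\pi_G(v)\notin N_G(g)$, then $N_{G\circ H}(v)$ is disjoint from ${}^g\!H$ and the coverage reduces to the rainbow property of $f$. The remaining case $v\notin{}^g\!H$ with $\pi_G(v)\in N_G(g)$ is the step I expect to be the main obstacle, since $v$'s color-$2$ coverage under $f$ may have come entirely from ${}^g\!H$ and could be destroyed by the swap; the saving grace is that ${}^g\!H\subseteq N_{G\circ H}(v)$ in this situation, so $\bigcup_{u\in N(v)}f'(u)\supseteq\bigcup_{h\in V(H)}\phi(h)=\{1,2\}$, and the replacement layer covers both colors on its own. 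The resulting strict weight decrease contradicts the minimality of $f$, and the symmetric argument handles $\pi_G(V_2\cup V_{12})$.
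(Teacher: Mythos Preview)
Your proof is correct and follows essentially the same route as the paper's: both argue by contradiction, deduce that the undominated layer $^g\!H$ must be entirely labelled $\{2\}$, use $|V(H)|\ge 3$ and connectedness to find a vertex of degree at least $2$, and then relabel that layer (giving the high-degree vertex $\{1,2\}$ and its neighbors $\emptyset$) to drop the weight. The only cosmetic difference is that the paper keeps the remaining vertices of $^g\!H$ at $\{2\}$ while you switch them to $\{1\}$, and you spell out more carefully why vertices in neighboring layers remain covered after the swap.
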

\begin{proof}
Suppose that $A=\pi_G(V_1 \cup V_{12})$ does not dominate $G$. Fix
an arbitrary vertex $x \not\in N_G[A]$. No vertex
of the $H$-layer $^x\!H$ belongs to $V_1\cup
V_{12}$ or is adjacent to a vertex in $V_1\cup
V_{12}$.  This implies that $^x\!H \subseteq V_2$.

Since $H$ is connected and has order at least 3, it follows that $H$
has a vertex of degree 2 or more.  Let $a$ be such a vertex of $H$.
Let $W_{\emptyset}=(V_{\emptyset}\cup(\{x\} \times N_H(a))$, let $W_1=V_1$,
let $W_2=V_2\setminus (\{x\} \times N_H[a])$ and let $W_{12}=V_{12}\cup \{(x,a)\}$.

It is easy to check that $(W_{\emptyset},W_1,W_2,W_{12})$ is a
partition of $V(G\circ H)$ induced by a $2$-rainbow dominating
function and yet $|W_1|+|W_2|+ 2|W_{12}| <  |V_1|+|V_2|+ 2|V_{12}|$.
This contradiction shows that $\pi_G(V_1 \cup V_{12})$ dominates
$G$. Interchanging the roles of 1 and 2 proves the lemma.
\end{proof}

\begin{lem}\label{lem:projection2}
Let $G$ be a non-trivial, connected graph.  There exists a partition
$(V_{\emptyset},V_1,V_2,V_{12})$ of $V(G\circ K_2)$ induced by a
$2$-rainbow dominating function of $G\circ K_2$ of minimum weight such
that  $\pi_G(V_1 \cup V_{12})$ and $\pi_G(V_2 \cup V_{12})$ each
dominate $G$.
\end{lem}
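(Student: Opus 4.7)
The plan is to start from an arbitrary minimum $2$-rainbow dominating function $f$ of $G\circ K_2$ and modify it locally at ``bad'' vertices, preserving its weight, until both $A:=\pi_G(V_1\cup V_{12})$ and $B:=\pi_G(V_2\cup V_{12})$ dominate $G$. The swap in the proof of Lemma~\ref{lem:projection1} used a vertex of degree $\ge 2$ in $H$ to strictly decrease the weight; for $H=K_2$ no such vertex exists, so the corresponding swap will turn out to be weight-neutral and must be iterated rather than used for a direct contradiction. Write $V(K_2)=\{a,b\}$.

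First I would handle $A$. Suppose $A$ does not dominate $G$ and fix a vertex $x\notin N_G[A]$. Because $x\notin A$ and no $G$-neighbor of $x$ lies in $A$, no vertex of $^xK_2$ and no $G\circ K_2$-neighbor of $^xK_2$ belongs to $V_1\cup V_{12}$; a vertex of $^xK_2$ with label $\emptyset$ would then receive no label $1$ from any neighbor, so the rainbow condition forces $(x,a),(x,b)\in V_2$. Now define $f'$ by setting $f'((x,a))=\{1,2\}$, $f'((x,b))=\emptyset$, and $f'\equiv f$ elsewhere. The weight is preserved ($1+1=2+0$), and the only new empty-labeled vertex $(x,b)$ is dominated by its neighbor $(x,a)$, whose label is now $\{1,2\}$. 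Every other $V_\emptyset^{f'}$-vertex adjacent to $(x,b)$ has the form $(y,h)$ with $y\in N_G(x)$, and is therefore also adjacent to $(x,a)$, so it still sees both labels. Hence $f'$ is again a minimum $2$-rainbow dominating function; its $A$ strictly contains the old one since $(x,a)\in V_{12}^{f'}$, while its $B$ coincides with the old $B$ because $(x,a)$ merely moved within $V_2\cup V_{12}$.

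Iterating this swap strictly increases $|N_G[A]|$ at each step and leaves both $B$ and the weight unchanged, so it terminates with $A$ dominating $G$. I would then run the mirror-image procedure for $B$: at any $y\notin N_G[B]$ the symmetric argument forces $(y,a),(y,b)\in V_1$, and the reassignment $(y,a)\mapsto\{1,2\}$, $(y,b)\mapsto\emptyset$ preserves the weight, enlarges $B$ by $y$, and leaves $A$ untouched since $y$ was already in $A$. After both phases, $A$ and $B$ both dominate $G$, as required.

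The only genuinely delicate point is the validity check for each swap. It hinges on the lexicographic-product fact that whenever $y\in N_G(x)$, both vertices of $^yK_2$ belong to the common open neighborhood of $(x,a)$ and $(x,b)$: this ensures that transferring label coverage from $(x,b)$ to $(x,a)$ cannot uncover any vertex previously dominated by $(x,b)$, and it is precisely this feature that lets the argument go through even though $K_2$ has no vertex of degree at least $2$.
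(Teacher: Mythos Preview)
Your proof is correct and follows essentially the same approach as the paper: start from an arbitrary minimum $2$-rainbow dominating function, observe that a vertex $x$ not dominated by $A$ forces $^x\!K_2\subseteq V_2$, perform the weight-neutral swap $(x,a)\mapsto\{1,2\}$, $(x,b)\mapsto\emptyset$, iterate, and then repeat symmetrically for $B$. Your write-up is in fact slightly more detailed than the paper's in verifying that the swap preserves the rainbow condition, using the key lexicographic-product fact that every vertex of $^y\!K_2$ for $y\in N_G(x)$ is adjacent to both $(x,a)$ and $(x,b)$.
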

\begin{proof}
As in the proof of Lemma~\ref{lem:projection1} we take a $2$-rainbow
dominating partition $(V_{\emptyset},V_1,V_2,V_{12})$ of $G\circ K_2$
of minimum weight.  Let $A=\pi_G(V_1 \cup V_{12})$ and let $B=\pi_G(V_2 \cup V_{12})$.
Suppose that $A$ does not
dominate $G$.  Let $x$ be a vertex of $G$ not dominated by $A$.  This implies
that $^x\!K_2 \subseteq V_2$.  Let $V(K_2)=\{h_1,h_2\}$.

We define the partition $(W_{\emptyset},W_1,W_2,W_{12})$ as follows.
Let $W_{\emptyset}=V_{\emptyset}\cup \{(x,h_2)\}$, let $W_1=V_1$, let
$W_2=V_2\setminus \{(x,h_1),(x,h_2)\}$ and let $W_{12}=V_{12}\cup\{(x,h_1)\}$.
The partition $(W_{\emptyset},W_1,W_2,W_{12})$ is a $2$-rainbow
dominating partition of $G\circ K_2$.  In addition, $\pi_G(W_1 \cup W_{12})$
dominates more vertices in $G$ than $A$ does  while $\pi_G(W_2 \cup W_{12})$ dominates the
same subset of $G$ that $B$ dominates since $B=\pi_G(W_2 \cup W_{12})$.

If $\pi_G(W_1 \cup W_{12})$ does not dominate $G$, then we can repeat this
process until we arrive at a $2$-rainbow dominating partition such
that the projection onto $G$ of those vertices labeled $\{1\}$ or
$\{1,2\}$ dominates $G$ while simultaneously $B$ is the projection of those
vertices labeled $\{2\}$ or $\{1,2\}$. If $B$ does not dominate $G$, then we
continue on with this new partition and reverse the roles of 1 and 2.  This
will lead to a $2$-rainbow dominating function that has the required property.
\end{proof}

\begin{thm} \label{thm:lower2rainbow}
For every connected graph $G$ and every non-trivial, connected graph
$H$,\\  $\rd(G \lex H) \ge 2\gamma(G)$.
\end{thm}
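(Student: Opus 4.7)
The plan is to reduce the theorem to the two projection lemmas (Lemmas~\ref{lem:projection1} and~\ref{lem:projection2}) that have just been established, and then read off the weight bound by double counting.

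First I would dispose of the degenerate case $G = K_1$ exactly as the paragraph preceding the theorem already does: then $G \lex H = H$ is non-trivial and connected, so $\rd(G \lex H) = \rd(H) \ge 2 = 2\gamma(G)$. So I may assume $G$ is a non-trivial connected graph. Let $f$ be a $2$-rainbow dominating function of $G \lex H$ of minimum weight, with induced partition $(V_\emptyset, V_1, V_2, V_{12})$, chosen (in the case $H = K_2$) so that the conclusion of Lemma~\ref{lem:projection2} holds. Depending on whether $|V(H)| \ge 3$ or $H = K_2$, one of Lemma~\ref{lem:projection1} or Lemma~\ref{lem:projection2} applies and guarantees that both projected sets
\[
A := \pi_G(V_1 \cup V_{12}) \qquad \text{and} \qquad B := \pi_G(V_2 \cup V_{12})
\]
are dominating sets of $G$.

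The key numerical step is then the inequality $|V_1 \cup V_{12}| \ge |A|$, which holds because $\pi_G$ restricted to $V_1 \cup V_{12}$ maps onto $A$; hence $|V_1| + |V_{12}| \ge |A| \ge \gamma(G)$. The identical argument with the roles of $1$ and $2$ interchanged yields $|V_2| + |V_{12}| \ge |B| \ge \gamma(G)$. Adding these two inequalities gives
\[
\rd(G \lex H) \;=\; \|f\| \;=\; |V_1| + |V_2| + 2|V_{12}| \;\ge\; 2\gamma(G),
\]
which is precisely the claimed bound.

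Nothing in this outline is technically hard once the two projection lemmas are in hand; the entire difficulty of the theorem has been absorbed into those lemmas, whose proofs rely on the local exchange argument (relabelling a small neighbourhood around a would-be uncovered vertex to strictly decrease the weight or at least increase the dominated portion of $G$). The only place where one has to be a little careful is ensuring that the proof covers $H = K_2$: Lemma~\ref{lem:projection1} fails there because there is no vertex of degree $\ge 2$ in $H$ to play the role of $a$, and this is exactly why Lemma~\ref{lem:projection2} was proved separately. So the main structural obligation in the write-up is simply to split on $|V(H)|$ and invoke the correct lemma in each case.
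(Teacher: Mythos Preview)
Your proposal is correct and follows essentially the same argument as the paper: dispose of $G=K_1$, then invoke the appropriate projection lemma (Lemma~\ref{lem:projection1} when $|V(H)|\ge 3$, Lemma~\ref{lem:projection2} when $H=K_2$) to conclude that both $\pi_G(V_1\cup V_{12})$ and $\pi_G(V_2\cup V_{12})$ dominate $G$, and add the resulting inequalities. Your write-up is in fact slightly more careful than the paper's in making explicit the case split on $|V(H)|$ and in noting that for $H=K_2$ one must \emph{choose} the partition furnished by Lemma~\ref{lem:projection2}.
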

\begin{proof}
The inequality was proved for $G$ of order 1 at the beginning of
this section. Now, let $|V(G)|\ge 2$ and let
$(V_{\emptyset},V_1,V_2,V_{12})$ be a partition of $V(G \lex H)$
induced by a minimum $2$-rainbow dominating function. Using
Lemmas~\ref{lem:projection1} and \ref{lem:projection2} we derive
\[\rd(G \lex H)=|V_1|+|V_2|+2|V_{12}|\ge |\pi_G(V_1 \cup V_{12})|+
|\pi_G(V_2 \cup V_{12})|\ge 2\gamma(G)\,.\]
\end{proof}

There are large classes of graphs that each have equal domination
number and total domination number.  For example,
see~\cite{DGHM-2006} for a constructive characterization of trees
with this property.  Combining the results in
Proposition~\ref{prop:totalupperbound} and
Theorem~\ref{thm:lower2rainbow} we immediately get the following.

\begin{cor} \label{cor:gammaequalsgammat}
If $G$ and $H$ are non-trivial, connected graphs and
$\gamma(G)=\gamma_t(G)$, then  \\ $\rd(G \lex H)=2\gamma(G)$.
\end{cor}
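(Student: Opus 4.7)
The plan is simply to sandwich $\rd(G \lex H)$ between two bounds already established in the paper and observe that the hypothesis $\gamma(G) = \gt(G)$ collapses them. Since $G$ and $H$ are both non-trivial and connected, $G$ in particular has no isolated vertices, so Proposition \ref{prop:totalupperbound} applies with $k=2$ and yields the upper bound $\rd(G \lex H) \le 2\gt(G)$. In the other direction, Theorem \ref{thm:lower2rainbow} applies verbatim under these hypotheses and gives $\rd(G \lex H) \ge 2\gamma(G)$. Substituting $\gt(G) = \gamma(G)$ on the right of the upper bound makes the two inequalities coincide, forcing $\rd(G \lex H) = 2\gamma(G)$.

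There is essentially no obstacle here beyond checking that the hypotheses of the cited results are satisfied. The only point worth a moment of care is that Proposition \ref{prop:totalupperbound} requires $G$ to have no isolated vertex (which is automatic since $G$ is non-trivial and connected) and places no restriction on $H$, while Theorem \ref{thm:lower2rainbow} requires $H$ to be non-trivial and connected — both conditions are part of the statement of the corollary. So the proof is a one-line chain: $2\gamma(G) \le \rd(G \lex H) \le 2\gt(G) = 2\gamma(G)$.
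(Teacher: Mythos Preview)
Your proof is correct and matches the paper's own argument exactly: the corollary is stated immediately after Proposition~\ref{prop:totalupperbound} and Theorem~\ref{thm:lower2rainbow} as their direct combination, and your check that the hypotheses of both results are met is accurate.
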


We now show that with no assumption about the relationship of $\gamma(G)$ and
$\gamma_t(G)$ we  get the same value for the $2$-rainbow domination number
of $G \lex H$ as in Corollary~\ref{cor:gammaequalsgammat}
by instead assuming that $\rd(H)=2$.

\begin{thm}
For every non-trivial, connected graph $G$ and every graph $H$ such that
$\rd(H)=2$,
$$\rd(G \lex H) = 2\gamma(G).$$
\end{thm}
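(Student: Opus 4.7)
The lower bound $\rd(G \lex H) \ge 2\gamma(G)$ is already provided by Theorem~\ref{thm:lower2rainbow} in the main case where $H$ is connected; the only disconnected $H$ with $\rd(H)=2$ is $H\cong 2K_1$, and that case can be handled by a minor variant of the projection-and-exchange argument used in Lemmas~\ref{lem:projection1}--\ref{lem:projection2}. So the heart of the matter will be the matching upper bound.

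To prove $\rd(G\lex H)\le 2\gamma(G)$ I plan to lift a minimum $2$-rainbow dominating function of $H$ to $G\lex H$ along a minimum dominating set of $G$. Specifically, I would fix a minimum dominating set $D$ of $G$ and choose a minimum $2$-rainbow dominating function $\hat f$ of $H$ that uses \emph{both} colours, i.e.\ $\bigcup_{h\in V(H)}\hat f(h)=[2]$. Define $f\colon V(G\lex H)\to 2^{[2]}$ by $f(v,h)=\hat f(h)$ if $v\in D$ and $f(v,h)=\emptyset$ otherwise; then $\|f\|=|D|\cdot\|\hat f\|=2\gamma(G)$. The validity check splits into two cases. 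For $(v,h)$ with $f(v,h)=\emptyset$ and $v\in D$, necessarily $\hat f(h)=\emptyset$ and the rainbow condition is inherited inside the $H$-layer $^{v}\!H$ from $\hat f$ being a $2$-rainbow dominating function of $H$. For $(v,h)$ with $v\notin D$, any $u\in D\cap N_G(v)$ makes $(v,h)$ adjacent in $G\lex H$ to the entire $H$-layer $^{u}\!H$, whose labels collectively contain both colours by the choice of $\hat f$.

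The one slightly delicate step, and essentially the only obstacle, is the existence of a minimum $2$-rainbow dominating function $\hat f$ of $H$ that uses both colours. If some minimum $\hat f$ used only the colour $1$, then its weight $2$ would force exactly two vertices labeled $\{1\}$ and an empty label on every other vertex; such an empty-labeled vertex would have no colour $2$ in its neighborhood, which is impossible, so $|V(H)|=2$. In that case $H\in\{K_2,2K_1\}$, no vertex bears the empty label, and a trivial relabeling of one of the two vertices to $\{2\}$ yields the desired $\hat f$. Beyond this minor bookkeeping, I do not anticipate any further obstacle.
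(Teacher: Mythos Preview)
Your proposal is correct and follows essentially the same route as the paper. The paper derives the upper bound by applying Proposition~\ref{prop:upper} with the dominating couple $(\emptyset,D)$ for a minimum dominating set $D$ of $G$---which is precisely your lifting construction $f(v,h)=\hat f(h)$ for $v\in D$---and obtains the lower bound directly from Theorem~\ref{thm:lower2rainbow}; your added justification that a minimum $\hat f$ using both colours exists, and your remark on the disconnected case $H\cong 2K_1$, simply make explicit points that the paper treats as implicit.
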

\begin{proof} It is easy to see that if $B \subseteq V(G)$, then $(\emptyset,B)$ is a
dominating couple of $G$ if and only if $B$ is a dominating set of $G$.
Appealing to Proposition~\ref{prop:upper} with $k=2$ shows that
$\rd(G \lex H) \le \rd(H)\gamma(G)= 2\gamma(G)$. The desired equality follows by
Theorem \ref{thm:lower2rainbow}.
\end{proof}

By Proposition \ref{prop:totalupperbound}, an upper bound for $\rd
(G\circ H)$ is $2 \gt(G)$. We will prove that in the case when $\rd
(H)\geq 4$, this bound is actually the exact value for $\rd (G\circ
H)$.  In what follows we say that a layer $^g\!H$ {\it contributes
$k$ (respectively, at least $k$)}  to the weight of a $2$-rainbow
dominating function $f$ of $(G\circ H)$ if $k=\sum_{h \in V(H)}|f(g,h)|$,
(respectively, $k \le \sum_{h \in V(H)}|f(g,h)|$).

\begin{thm}
If $G$ and $H$ are non-trivial, connected graphs and $\rd(H)\geq 4$,
then \\ $\rd (G \circ H)=2\gt (G)$.
\end{thm}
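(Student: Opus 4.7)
The upper bound $\rd(G\circ H)\le 2\gt(G)$ is Proposition~\ref{prop:totalupperbound} with $k=2$, so I focus on the lower bound. Given a minimum $2$-rainbow dominating function $f$ of $G\circ H$ with partition $(V_\emptyset,V_1,V_2,V_{12})$, let $D_i=\pi_G(V_i\cup V_{12})$ (a dominating set of $G$ by Lemma~\ref{lem:projection1}), $X_i^g=\{h\in V(H): i\in f(g,h)\}$ (so that $g\in D_i$ iff $X_i^g\ne\emptyset$), and $w(g)=|X_1^g|+|X_2^g|$. The plan is to build from $D_1$ and $D_2$ two total dominating sets $T_1,T_2$ of $G$ with $|T_1|+|T_2|\le\|f\|$, which will give $2\gt(G)\le|T_1|+|T_2|\le\|f\|=\rd(G\circ H)$.

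Let $Z_i'$ be the set of vertices of $D_i$ that have no neighbor in $D_i$ (the isolated vertices of $G[D_i]$). For each $g\in Z_i'$ pick a neighbor $n_g\in N_G(g)$; since $g$ has no neighbor in $D_i$ we have $n_g\notin D_i$, and $n_g$ exists because $G$ is non-trivial and connected. Set $T_i=D_i\cup\{n_g:g\in Z_i'\}$; a routine check (handling $g\notin D_i$, $g\in D_i\setminus Z_i'$, $g\in Z_i'$, and $g=n_{g'}$ for some $g'\in Z_i'$) shows that $T_i$ is a total dominating set of $G$, and $|T_i|\le|D_i|+|Z_i'|$. Because $|V_i\cup V_{12}|=\sum_{g\in D_i}|X_i^g|$, the target inequality $|T_1|+|T_2|\le\|f\|$ reduces to
\[
\sum_{g\in D_1}(|X_1^g|-1)+\sum_{g\in D_2}(|X_2^g|-1)\ \ge\ |Z_1'|+|Z_2'|,
\]
which I would verify by comparing, for each $g\in V(G)$, its contribution on each side.

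The verification relies on two consequences of $\rd(H)\ge 4$: (i) $\gamma(H)\ge 2$, because $\gamma(H)=1$ would give $\rd(H)\le 2$; and (ii) $\Delta(H)\le |V(H)|-3$, because if some $v\in V(H)$ had $\deg_H(v)=|V(H)|-2$ with unique non-neighbor $u$, then $v\mapsto\{1,2\}$, $u\mapsto\{1\}$, all other vertices $\emptyset$ would be a $2$-rainbow dominating function of $H$ of weight $3$. If $g\in Z_1'\cap Z_2'$, then every empty vertex of $^g\!H$ has labels of both colors available from within $^g\!H$, so the restriction of $f$ to $^g\!H$ is itself a $2$-rainbow dominating function of $H$ and $w(g)\ge\rd(H)\ge 4$, contributing $\ge 2$ on the left. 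If $g\in Z_1'\setminus D_2$ (symmetrically $g\in Z_2'\setminus D_1$), then $X_2^g=\emptyset$ and $X_1^g$ dominates $V(H)\setminus X_1^g$, so $X_1^g$ is a dominating set of $H$ and $|X_1^g|\ge\gamma(H)\ge 2$, contributing $\ge 1$.

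The main obstacle is the subcase $g\in Z_1'\setminus Z_2'$ with $g\in D_1\cap D_2$ (and its symmetric variant), in which the $g$-contribution equals $w(g)-2$ and one must exclude $w(g)=2$. Suppose for contradiction that $|X_1^g|=|X_2^g|=1$, say $X_1^g=\{h_0\}$ and $X_2^g=\{h_1\}$. Since $g$ has no neighbor in $D_1$, the label $1$ at each empty vertex of $^g\!H$ must come from within $^g\!H$, forcing $h_0$ to dominate $V(H)\setminus\{h_0,h_1\}$ in $H$. If $h_0\sim h_1$, then $h_0$ dominates all of $V(H)$, contradicting (i); otherwise $\deg_H(h_0)=|V(H)|-2$, contradicting (ii). Thus $w(g)\ge 3$ and the contribution is $\ge 1$. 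Summing over $g\in V(G)$ yields $|T_1|+|T_2|\le\|f\|$, completing the proof.
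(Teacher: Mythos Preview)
Your proof is correct and takes a genuinely different route from the paper's. The paper selects, among all minimum $2$-rainbow dominating functions, one that maximizes $|\pi_G(V_{12})|$, and then argues by a sequence of label-exchange modifications that the projections $\pi_G(V_1\cup V_{12})$ and $\pi_G(V_2\cup V_{12})$ are themselves total dominating sets of $G$; each potential failure is repaired by shifting labels between layers, contradicting either the minimality of the weight or the maximality of $|\pi_G(V_{12})|$. You instead work with an arbitrary minimum $f$, accept that the projections $D_1,D_2$ may only be dominating, and augment each $D_i$ to a total dominating set $T_i$ by attaching one neighbor to every isolated vertex of $G[D_i]$; the inequality $|T_1|+|T_2|\le\|f\|$ is then a clean vertex-by-vertex accounting, driven by the structural facts $\gamma(H)\ge 2$ and $\Delta(H)\le|V(H)|-3$ that follow from $\rd(H)\ge 4$. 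Your argument avoids the extremal choice of $f$ and the multi-case exchange machinery, giving a shorter and more transparent lower-bound proof; the paper's approach, on the other hand, yields the stronger structural statement that some minimum $2$-rainbow dominating function already has total dominating projections, which may be of independent interest. One small presentational point: in your observation (ii) you only argue that degree exactly $|V(H)|-2$ is impossible, so the conclusion $\Delta(H)\le|V(H)|-3$ silently also uses (i) to exclude $|V(H)|-1$; since you later invoke (i) and (ii) separately in the two subcases $h_0\sim h_1$ and $h_0\not\sim h_1$, this causes no problem, but it would read more cleanly to state (ii) as ``no vertex of $H$ has degree $|V(H)|-2$''.
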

\begin{proof} By the above observation it suffices to prove that $\rd (G\circ H) \geq 2
\gt(G)$. Let $(V_{\emptyset},V_1,V_2,V_{12})$ be the partition of
$V(G\circ H)$ that arises from a $2$-rainbow dominating function $f$
of minimum weight with the property that the cardinality of $\pi_G(
V_{12})$ is maximum.

We claim that $\pi_G(V_1 \cup V_{12})$ and $\pi_G(V_2 \cup V_{12})$
are total dominating sets of $G$ (we already know that they are
dominating sets by Lemma \ref{lem:projection1}). Suppose to the
contrary that one of the sets, say $\pi_G(V_1 \cup V_{12})$, is not
a total dominating set of $G$. It follows that there exists $g\in
\pi_G(V_1 \cup V_{12})$ such that $^{g'}\!H\subseteq
V_{\emptyset}\cup V_2$ for every $g'\in N_G(g)$.
Fix such a neighbor $g'$ of $g$.

Suppose that $f((g,x))\neq \emptyset$ for every vertex $x$ in $H$.
Since $\rd(H)\geq 4$ we see that $^g\!H$ contributes at least 4 to the
weight of $f$.  Let $h$ be any vertex of $H$, and  let
$\widehat{f}$ be the function on $V(G\lex H)$, induced by the
partition $(W_{\emptyset},W_1,W_2,W_{12})$ of $V(G\circ H)$, where
$W_{\emptyset}=V_{\emptyset}\cup ((^g\!H \cup\, ^{g'}\!H) \setminus \{(g,h), (g',h)\})$,
$W_1=V_1\setminus (^g\!H \cap V_1)$, $W_2=V_2\setminus ((^g\!H \cup\, ^{g'}\!H)\cap V_2)$
and $W_{12}=(V_{12}\setminus ^g\!H) \cup \{(g,h),(g',h)\}$. One can check that
$\widehat{f}$ is a $2$-rainbow dominating function of $G\circ H$, and
by its definition $\|\widehat{f}\| \le \|f\|$.  This implies that
$\widehat{f}$ is a $2$-rainbow dominating function of $G\circ H$ of minimum
weight.  This is a contradiction since $|\pi_G(W_{12})| > |\pi_G(V_{12})|$.
It follows that $V_{\emptyset}\cap\, ^g\!H\neq \emptyset$.

Now we distinguish the following two possibilities.

Case 1. If every vertex from $V_{\emptyset}\cap\, ^g\!H$ is adjacent
to a vertex in $(V_2\cup V_{12})\cap\, ^g\!H$ (i.e. the $2$-rainbow
domination of the $^g\!H$-layer is assured within the layer), then
the $^g\!H$-layer contributes at least 4 to the weight of $f$, since
$\rd(H)\geq 4$. By defining $\widehat{f}$ as above we obtain a
$2$-rainbow dominating function on $V(G\circ H)$, the weight of
which is less than or equal to the weight of $f$, a contradiction in
either case (in the second case with $f$ being a $2$-rainbow
dominating function with the maximum cardinality of $\pi_G(
V_{12})$).

Case 2. Suppose that there exists $(g,h)\in V_{\emptyset}\cap\,
^g\!H$ that is not adjacent to any vertex in $(V_2\cup V_{12})\cap\,
^g\!H$ (and is adjacent to $(g,h'')$ with $f((g,h''))=\{1\}$). This
implies that there exist $g'\in N_G(g)$ and $h'\in V(H)$ such that
$f((g',h'))=\{2\}$.

First we show that there are at least two vertices in $(V_1\cup
V_{12})\cap\, ^g\!H$. If we assume to the contrary  that $|(V_1\cup
V_{12})\cap\, ^g\!H|=1$, then $V_{12}\cap\, ^g\!H=\emptyset$ and there
exists $(g,h''')$  with $f((g,h'''))=\{2\}$ (otherwise $H$ would
have a universal vertex, but this is in contradiction with
$\rd(H)\geq 4$). Moreover, there are at least two such vertices,
since equalities $|V_{1}\cap\, ^g\!H|=|V_{2}\cap\, ^g\!H|=1$ imply
$\rd(H)\leq 3$, a contradiction. Let
$(W_{\emptyset},W_1,W_2,W_{12})$ be the following partition of
$V(G\circ H)$: $W_{\emptyset}=V_{\emptyset} \cup \{(g,h''')\}$,
$W_1=V_1$, $W_2=V_2\setminus \{(g,h'''),(g',h')\}$ and
$W_{12}=V_{12}\cup \{(g',h')\}$. One can observe that this partition
induces a $2$-rainbow dominating function $\widehat{f}$ on $V(G\circ
H)$ with the same weight as $f$, and $|\pi_G(W_{12})|>|\pi_G(V_{12})|$, a contradiction. Hence
there are at least two vertices in $(V_1\cup V_{12})\cap\, ^g\!H$.

Now, the function $\widehat{f}$ induced by the partition
$(W_{\emptyset},W_1,W_2,W_{12})$ where $W_{\emptyset}=V_{\emptyset}
\cup \{(g,h'')\}$, $W_1=V_1\setminus \{(g,h'')\}$, $W_2=V_2\setminus
\{(g',h')\}$ and $W_{12}=V_{12}\cup \{(g',h')\}$ is a $2$-rainbow
dominating function on $V(G\circ H)$ with the same weight as $f$,
and such that $|\pi_G(W_{12})|>|\pi_G(V_{12})|$,
which is a final contradiction. The claim that $\pi_G(V_1 \cup
V_{12})$ and $\pi_G(V_2 \cup V_{12})$ are both total dominating sets
of $G$ is proved.

From this we easily derive the desired result. Namely, since both
of $\pi_G(V_1 \cup V_{12})$ and $\pi_G(V_2 \cup V_{12})$ are total
dominating sets of $G$ we get
\[\rd (G\circ H)=|V_1|+|V_2|+2|V_{12}|\geq |\pi_G(V_1 \cup V_{12})|+
|\pi_G(V_2 \cup V_{12})|\geq 2\gamma_t(G)\,.\] \end{proof}

\bigskip

In the cases $\rd(H)=2$ and $\rd(H)\geq 4$ we obtained the exact
values for $\rd (G\lex H)$. However, the case $\rd(H)=3$ is the most
challenging. Combining Proposition~\ref{prop:upper} and
Theorem~\ref{thm:lower2rainbow} we obtain the following sharp
bounds.
\begin{cor} \label{cor:gen2bounds}
If $G$ and $H$ are non-trivial, connected graphs such that
$\rd(H)=3$,  then
\[2\gamma(G) \leq \rd(G\lex H) \leq \min \{2|A|+3|B|: (A,B) \textit{ is a
dominating couple of } G\}\,.\]
\end{cor}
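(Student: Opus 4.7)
The statement is a corollary of two results already established in the paper, and the plan is simply to combine them. The lower bound $2\gamma(G)\le \rd(G\lex H)$ is nothing more than a direct citation of Theorem~\ref{thm:lower2rainbow}, whose hypotheses ($G$ connected with $|V(G)|\ge 2$, and $H$ non-trivial and connected) coincide exactly with those of the corollary. So for that inequality there is literally nothing to prove beyond pointing at the theorem.

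For the upper bound I would invoke Proposition~\ref{prop:upper} with $k=2$. That proposition needs $|V(H)|\ge k=2$ and $G$ non-trivial; the latter is assumed, and the former is automatic from the hypothesis $\rd(H)=3$: the constant assignment $v\mapsto\{1\}$ satisfies the $2$-rainbow condition vacuously (no vertex has empty label), so $\rd(H)\le |V(H)|$, and hence $|V(H)|\ge 3\ge 2$. Substituting $k=2$ and $\rd(H)=3$ into the conclusion of Proposition~\ref{prop:upper} yields exactly
\[
\rd(G\lex H)\le \min\{2|A|+3|B|:(A,B)\text{ is a dominating couple of }G\},
\]
as required. Chaining the two inequalities gives the corollary.

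There is no genuine obstacle here, since the corollary is essentially an assembly of two prior results; the only thing that warrants a remark in the written proof is the verification that $\rd(H)=3$ forces $|V(H)|\ge 2$, so that Proposition~\ref{prop:upper} is applicable. If desired, one could also gesture at the sharpness mentioned in the surrounding discussion (the $P_7\lex H$ example from Figure~\ref{potka} realizes equality in the upper bound), but this is not part of the statement being proved.
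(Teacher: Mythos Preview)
Your proposal is correct and matches the paper's own approach, which simply states that the corollary follows by combining Proposition~\ref{prop:upper} (with $k=2$) and Theorem~\ref{thm:lower2rainbow}. Your extra remark that $\rd(H)=3$ forces $|V(H)|\ge 3\ge 2$ (so Proposition~\ref{prop:upper} applies) is a helpful detail the paper leaves implicit.
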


As we will see in the theorem that follows, the upper bound in the
above corollary is actually the exact value provided that every
minimum $2$-rainbow dominating function of $H$ enjoys a certain
property.

\begin{thm} \label{challenge}
Let $H$ be a connected graph with $\rd(H)=3$ and assume that for
every minimum $2$-rainbow dominating function $\varphi$ of $H$,
$\varphi(h)  \neq \{1,2\}$ for every vertex $h$ of $H$.  If $G$ is
any graph, then
\[\rd (G\circ H)=\min \{2|A|+3|B|: (A,B) \textit{ is a
dominating couple of }  G \}\,.\]
\end{thm}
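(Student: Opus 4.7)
The upper bound $\rd(G\circ H)\le\min\{2|A|+3|B|:(A,B)\text{ a dominating couple of }G\}$ is immediate from Proposition~\ref{prop:upper} with $k=2$ and $\rd(H)=3$. For the reverse inequality, the plan is to take a minimum $2$-rainbow dominating function $f$ of $G\circ H$ with partition $(V_\emptyset,V_1,V_2,V_{12})$, and for each $g\in V(G)$ set $w_g=\sum_{h\in V(H)}|f(g,h)|$ and $C_g=\bigcup_{h\in V(H)}f(g,h)$. Then I would produce a dominating couple $(A,B)$ of $G$ with $2|A|+3|B|\le\|f\|$ by choosing $A=\{g:w_g=2\}$ and $B=\{g:w_g\ge 3\}$.

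With this choice the inequality $\|f\|=\sum_g w_g\ge 2|A|+3|B|$ is automatic provided no layer has weight exactly one. The dominating couple property also follows in that case: for any $g\notin B$ we have $w_g\le 2<\rd(H)$, so the restriction of $f$ to $^g\!H$ is not a $2$-rainbow dominating function of $^g\!H\cong H$, forcing some empty vertex of $^g\!H$ to need a color from outside and hence a neighbor $g'\in N_G(g)$ with $w_{g'}\ge 1$; in the absence of weight-$1$ layers this gives $g'\in A\cup B$. The proof therefore reduces to selecting a minimum $2$-rainbow dominating function of $G\circ H$ in which no layer has weight exactly one.

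For this I would pick a minimum $2$-rainbow dominating function $f$ that additionally minimizes $|\{g:w_g=1\}|$ (using $|V_{12}|$ as a tiebreaker), and derive a contradiction from the assumption that $f(g,h_0)=\{1\}$ and $f(g,h)=\emptyset$ for all $h\ne h_0$. Since $\rd(H)=3$ forces $\gamma(H)\ge 2$, $h_0$ is not universal in $H$, so some $h^*\in V(H)\setminus N_H[h_0]$ exists and $(g,h^*)$ must receive both colors from outside $^g\!H$; this produces $g'_1,g'_2\in N_G(g)$ with $1\in C_{g'_1}$ and $2\in C_{g'_2}$. Minimality of $f$ rules out simply deleting the label $\{1\}$ at $(g,h_0)$, so there must be an essential empty vertex $(g^*,h^*)$ with $g^*\in N_G(g)$ whose only color-$1$ source in $f$ is $(g,h_0)$; in particular, no $g''\in N_G(g^*)\setminus\{g\}$ has $1\in C_{g''}$, and no $h''\in N_H(h^*)$ satisfies $1\in f(g^*,h'')$. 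I would then relocate the label $\{1\}$ from $(g,h_0)$ into a suitable vertex of $\,^{g^*}\!H$, obtaining another minimum $2$-rainbow dominating function of strictly smaller secondary parameter and hence the desired contradiction; the case $f(g,h_0)=\{2\}$ is handled symmetrically.

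The main obstacle is carrying out this relocation: one must select a position in $\,^{g^*}\!H$ so that every empty vertex losing its color-$1$ cover regains one and no other vertex becomes uncovered, and a case analysis based on $w_{g^*}$ and on where the color-$2$ labels lie in $\bigcup_{g'\in N_G(g)}\,^{g'}\!H$ is required. The hypothesis on $H$ enters exactly when a candidate modification would otherwise force some layer to carry a weight-$3$ restriction that is a $2$-rainbow dominating function of $H$ containing a $\{1,2\}$-label: the hypothesis rules such functions out as minimum $2$-rainbow dominating functions of $H$, so the reconfiguration can always be arranged with three singleton labels instead, preserving $\|f\|$.
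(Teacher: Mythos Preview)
Your overall strategy is right and matches the paper's endgame: once one has a minimum $2$-rainbow dominating function of $G\circ H$ with no layer of weight~$1$, the sets $A=\{g:w_g=2\}$ and $B=\{g:w_g\ge 3\}$ form a dominating couple with $2|A|+3|B|\le\|f\|$, and your verification of that last step is correct. The gap is in the relocation step itself. When you delete the $\{1\}$ at $(g,h_0)$, \emph{every} empty vertex in $\bigcup_{g'\in N_G(g)}\,^{g'}\!H$ whose only color-$1$ source was $(g,h_0)$ must be re-covered, and there may be several such essential vertices sitting in distinct layers $^{g^*_1}\!H,\,^{g^*_2}\!H,\ldots$ with the $g^*_i$ pairwise non-adjacent in $G$. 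Inserting a single $\{1\}$ into one $^{g^*_i}\!H$ then leaves the others uncovered, so the local move you describe need not produce a $2$-rainbow dominating function; and if the chosen $g^*$ has $w_{g^*}=0$, you trade one weight-$1$ layer for another and make no progress on your secondary parameter. Your sketch acknowledges a case analysis is needed but gives no mechanism to handle this global obstruction. Also, your account of where the hypothesis on $H$ enters is off: it is not used to re-express weight-$3$ layers with singleton labels, but rather to control weight-$2$ layers carrying labels $\{1\},\{2\}$.

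The paper sidesteps the local-relocation difficulty by a different extremal choice and a \emph{global} modification. It picks $f$ maximizing the number of layers with $\bigcup_h f(x,h)=\{1,2\}$; this forces every single-color layer to contribute exactly~$1$ (Claim~1). It then proves a trichotomy: each $^x\!H$ is of Type~1 ($x$ has a neighbor in $W_{12}$), Type~2 ($x$ has neighbors in both $W_1$ and $W_2$), or Type~3 (self-$2$-rainbow-dominated). The hypothesis on $H$ is used exactly here, to show a weight-$2$ layer with labels $\{1\},\{2\}$ cannot be of no type: otherwise the vertex labeled $\{1\}$ would be adjacent to every other vertex of $H$ except possibly the one labeled $\{2\}$, yielding a minimum $2$-rainbow dominating function of $H$ using the label $\{1,2\}$. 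With the trichotomy in hand, the paper zeroes out \emph{all} $W_1$ layers simultaneously and promotes \emph{all} $W_2$ layers from $\{2\}$ to $\{1,2\}$. This is globally valid because any layer that loses its $W_1$ neighbor is of Type~1 or Type~2 and hence still sees both colors through a $W_{12}$ or promoted $W_2$ neighbor. The resulting $p$ has $\|p\|\le\|f\|$ and no weight-$1$ layers, after which the dominating couple is read off exactly as you propose.
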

\begin{proof}
If $G$ is isomorphic to $K_1$, the claim from the theorem obviously
holds. Hence we assume that $G$ is a non-trivial graph. The graph $H$
contains at least four vertices since no connected graph of order less
than 4 has 2-rainbow domination number 3. Since no minimum weight
2-rainbow dominating function of $H$ uses the label $\{1,2\}$, it
follows that every minimum weight 2-rainbow dominating function of $H$
uses both of $\{1\}$ and $\{2\}$.

From among all minimum $2$-rainbow dominating functions of the graph
$G\circ H$ assume that $f$ is chosen with the property that for
every minimum $2$-rainbow dominating function $f_1$ of $G\circ H$,
\[\left |\,\{x \in V(G)\,:\, \{1,2\}=\bigcup_{h\in V(H)}f(x,h)\,\}\,\right | \ge
\left|\,\{x \in V(G)\,:\, \{1,2\}=\bigcup_{h\in
V(H)}f_1(x,h)\,\}\,\right|\,.\]

Let $(V_{\emptyset},V_1,V_2,V_{12})$ be the partition of $V(G \circ H)$ induced by $f$.

We now define a partition $(W_{\emptyset},W_1,W_2,W_{12})$ of
$V(G)$.
\begin{itemize}
\item $W_{\emptyset}=\{\, w \in V(G) \,:\, ^w\!H\subseteq V_{\emptyset}\,\}$;
\item $W_1=\{\, w\in V(G)\,:\, ^w\!H\subseteq V_1 \cup V_{\emptyset} \textup{ and } ^w\!H\cap V_1 \neq \emptyset\,\}$;
\item $W_2=\{\, w\in V(G)\,:\, ^w\!H\subseteq V_2 \cup V_{\emptyset} \textup{ and } ^w\!H\cap V_2 \neq \emptyset\,\}$; and
\item $W_{12}=\{\, w\in V(G)\,:\, \{1,2\}=\bigcup_{h\in V(H)}f(w,h)\,\}$.
\end{itemize}

First we prove that if $w \in W_1\cup W_2$, then by the choice of
the $2$-rainbow dominating function $f$ the layer $^w\!H$ contributes
exactly 1 to the weight of $f$.

\begin{claim} \label{claimW1}
If $x \in W_1$, then there is exactly one vertex in $^x\!H$ labeled
$\{1\}$. If $x \in W_2$, then there is exactly one vertex in $^x\!H$
labeled $\{2\}$.
\end{claim}

Fix $x \in W_1$, and suppose there are distinct vertices $h_1$ and
$h_2$ in $H$ such that $f(x,h_1)=\{1\}=f(x,h_2)$.

If $x$ is isolated in $G$, then $f$ restricted to $^x\!H$ is a minimum weight
2-rainbow dominating function of $^x\!H$.  However, since $x \in W_1$,
it follows that every vertex in $^x\!H$ is labeled $\{1\}$, and so
$^x\!H$ contributes at least 4 to the weight of $f$.  This contradiction
shows that $x$ is not isolated in $G$.  Let $v\in N_G(x)$.

We claim that there is a vertex with label $\emptyset$ in $^{x}\!H$.
Suppose to the contrary that $^{x}\!H \subseteq V_1$. We infer that such an $f$ cannot be a
minimum weight $2$-rainbow dominating function, since we can obtain a
$2$-rainbow dominating function of weight less than $\|f\|$ by
replacing the label $\{1\}$ with $\emptyset$ on each vertex in $^{x}\!H$ except for one
 and relabeling one vertex in $^{v}\!H$ with $\{1,2\}$.
Because of this contradiction it follows that at least one vertex
in $^{x}\!H$ has label $\emptyset$.

Hence, there is a vertex $y$ adjacent to $x$ in
$G$ such that $^y\!H$ contains a vertex labeled $\{2\}$ or a vertex
labeled $\{1,2\}$. However, by the minimality of the weight of $f$
it follows that $\{1,2\} \neq \bigcup_{h\in V(H)}f(y,h)$ (for otherwise
we could ``relabel'' $(x,h_2)$ with $\emptyset$ and the result would
be a $2$-rainbow dominating function with  smaller weight). Thus,
suppose that $f(y,h_3)=\{2\}$ for some $h_3\in V(H)$. Let $\widehat{f}:
V(G\circ H)\to 2^{[2]}$ be defined by
$\widehat{f}(x,h_2)=\emptyset$, $\widehat{f}(y,h_3)=\{1,2\}$ and
$\widehat{f}(g,h)=f(g,h)$ for every other vertex $(g,h) \not\in
\{(x,h_2),(y,h_3)\}$.  It is easy to see that $\widehat{f}$ is a
$2$-rainbow dominating function of $G\circ H$ having the same weight
as $f$.  However, under this new $2$-rainbow dominating function,
$\widehat{f}$, there are more $H$-layers containing both of the
labels 1 and 2 than there are with $f$.  This contradiction proves the
first statement.  The second statement is proved by interchanging
the roles of 1 and 2. Hence, the claim is verified.

\medskip
Note that $\rd(H)=3$ implies that $\gamma(H)=2$ or $\gamma(H)=3$.
Hence, if $w \in W_1 \cup W_2$, then Claim~\ref{claimW1} implies
that there exist $u,v\in N_G(w)$ and $h,k \in V(H)$ such that $1\in
f(u,h)$ and $2 \in f(v,k)$ (where it is possible that $u=v$ or
$h=k$).  It also follows from this claim that if $w \in V(G)$ and
the layer $^w\!H$ contributes 2 or more to the weight of $f$, then
$w \in W_{12}$. Indeed, $w \in W_{12}$ if and only if $^w\!H$
contributes at least 2 to the weight of $f$. Furthermore, suppose $w
\in W_{12}$ and $^w\!H$ contributes 3 or more to the weight of $f$.
Since $\rd(H)=3$ and $f$ is a minimum $2$-rainbow dominating function
of $G\circ H$, for this $w$ we can assume that the restriction of
$f$ to $^w\!H$ is a $2$-rainbow dominating function of the subgraph of
$G\circ H$ induced by this $H$-layer.  This follows by using the
labels $\emptyset$, $\{1\}$ and $\{2\}$ in a minimum $2$-rainbow
dominating function $\varphi$ of $H$ and setting
$f(w,h)=\varphi(h)$. We conclude that if $w \in W_{12}$, then the
layer $^w\!H$ contributes precisely 2 or 3 to the weight of $f$.

We will need to be able  to distinguish the following types of
$H$-layers:
\begin{itemize}
\item The layer $^x\!H$ is of {\it Type 1} if for some $y \in N_G(x)$, $y\in W_{12}$.
\item The layer $^x\!H$ is of {\it Type 2} if $^x\!H$ is not of Type 1,
and there exist distinct vertices $y,z \in N_G(x)$ such that $y \in
W_1$ and $z\in W_2$.
\item The layer $^x\!H$ is of {\it Type 3} if $^x\!H$
is $2$-rainbow dominated by $f$ restricted to $^x\!H$.
\end{itemize}

First we prove the following claim.

\begin{claim} \label{threetypes}
For any vertex $x$ of $G$, the $H$-layer $^x\!H$ is of exactly one of the
types listed above.
\end{claim}

To prove the claim we use the fact that $(W_{\emptyset},W_1,W_2,W_{12})$
is a partition of $V(G)$.

Consider first a vertex $x$ in $W_{\emptyset}$.  Every vertex in $^x\!H$ must have a
neighbor with a 1 in its label and a neighbor with a 2 in its label.  That is,
either $x$ has a neighbor in $W_{12}$, or $x$ has a neighbor in $W_1$ and
a neighbor in $W_2$.  In other words, $^x\!H$ is of Type 1 or of Type 2.

Suppose $x \in W_1$.  By Claim~\ref{claimW1} there is a unique $h \in V(H)$ such
that $f(x,h)=\{1\}$, and $f(x,h')=\emptyset$ for every $h' \in V(H)\setminus \{h\}$.  Since
$\gamma(H) \ge 2$ there is some vertex, say $(x,k)$, in $^x\!H$ that is not adjacent
to $(x,h)$ and such that  $f(x,k)=\emptyset$.  Now,
$(x,k)$ must have a neighbor with a 1 in its label and a neighbor with a 2 in its label.  That is,
either $x$ has a neighbor in $W_{12}$, or $x$ has a neighbor in $W_1$ and
a neighbor in $W_2$.  In other words, $^x\!H$ is of Type 1 or of Type 2.
The case $x \in W_2$ is handled similar to this with the roles of 1 and 2 interchanged.

Finally, assume that $x \in W_{12}$.  The layer $^x\!H$ contributes either 2 or 3 to
the weight of $f$.  Suppose $^x\!H$ contributes 3.  By an earlier argument we may assume
that $f$ restricted to $^x\!H$ is a 2-rainbow dominating function of $^x\!H$.
By our assumption on $H$ this means that the label $\{1,2\}$ does not appear on any
vertex in $^x\!H$.  Assume that $^x\!H$ is of Type 1 or Type 2.  In this case one of the labels
in $^x\!H$ could be changed from $\{1\}$ to $\emptyset$ or from $\{2\}$ to $\emptyset$
(whichever one of $\{1\}$ or $\{2\}$ that occurs twice in $^x\!H$) and this would yield a 2-rainbow
dominating function of $G \circ H$ having smaller weight than $f$.  This contradiction
implies that an $H$-layer of Type 3 is not also of Type 1 or Type 2.

Assume that $^x\!H$ contributes exactly 2 to the weight of $f$.  If $\{1,2\}$ occurs
as a label on some vertex $(x,h)$ in $^x\!H$, then since $\gamma(H) \ge 2$ there is
some vertex $(x,k)$  in $^x\!H$ that is not adjacent to $(x,h)$.  This vertex $(x,k)$
has a neighbor with a 1 in its label and a neighbor with a 2 in its label. These neighbors
are not in $^x\!H$ and once again as above we conclude that $^x\!H$ is of Type 1 or of Type 2.

Now, suppose that $^x\!H$ contributes exactly 2 to the weight of $f$ and there exist
distinct vertices $h_1$ and $h_2$ in $H$ such that $f(x,h_1)=\{1\}$ and $f(x,h_2)=\{2\}$.
Suppose that $^x\!H$ is not of Type 1 nor of Type 2. Since $\rd(H)=3$ there
is a vertex $(x,h)$ that is not adjacent to both $(x,h_1)$ and
$(x,h_2)$.  If $(x,h)$ is adjacent to neither of them, then it has a neighbor
with a 1 in its label and a neighbor with a 2 in its label and
both of these neighbors lie outside of $^x\!H$.  However, this contradicts our assumption
that $^x\!H$ is not of Type 1 nor of Type 2.  Thus, we may assume without
loss of generality that $(x,h)$ is adjacent
to $(x,h_1)$ but not to $(x,h_2)$. It follows that there exists $x'\in N_G(x)$
such that $x'\in W_2$. Let $f(x',h')=\{2\}$. By Claim~\ref{claimW1}, $f(x',k)=\emptyset$
for every $k\in V(H)\setminus \{h'\}$.  Since $^x\!H$ is not of Type 1 nor of Type 2, no neighbor
of $x$ belongs to $W_1\cup W_{12}$. This means that every vertex in
$^x\!H\setminus \{(x,h_1),(x,h_2)\}$ is adjacent to $(x,h_1)$. Let
$g$ be the function defined on $V(H)$ by $g(h_1)=\{1,2\}$,
$g(h_2)=\{2\}$ and $g(v)=\emptyset$ for every other vertex  $v$ of $H$. This function $g$ is
a $2$-rainbow dominating function of $H$ having weight 3 and also having
a vertex labeled $\{1,2\}$.  This contradiction shows that $^x\!H$ is of
Type 1 or of Type 2 and finishes the proof of Claim~\ref{threetypes}.

\medskip
We may assume without loss of generality that $|W_1| \ge |W_2|$.  We
now modify the function $f$ to produce another minimum $2$-rainbow
dominating function $p$ of $G\circ H$ which has the property that
each $H$-layer that receives a non-empty label contributes either
exactly 2 or exactly 3 to the weight of $p$. The general idea is
that if $w\in W_{\emptyset} \cup W_{12}$, then the labeling under
$p$ for vertices in $^w\!H$ will be the same as it was under $f$.
Thus, all $H$-layers that contribute 2 or 3 to the weight of $f$
will also contribute that amount to the weight of $p$. On the other
hand, some $H$-layers that contribute 1 to the weight of $f$ will
contribute 2 to the weight of $p$ while others will contribute 0 to
the weight of $p$.

We define $p$ by specifying the partition $(U_{\emptyset}, U_1,U_2,
U_{12})$ that $p$ induces on $V(G \circ H)$.  Let
\[U_1=\{\,(w,k)\,:\, w \in W_{12} \textup{ and } f(w,k)=\{1\}\,\}\,,\]
\[U_2=\{\,(w,k)\,:\, w \in W_{12} \textup{ and } f(w,k)=\{2\}\,\}\,,\]
\[U_{\emptyset}= V_{\emptyset} \cup \{\,(w,k)\,:\, w\in W_1 \,\}\,,\]
and
\[U_{12}= V_{12} \cup  \{\,(w,k)\,:\, w \in W_2 \textup{ and }f(w,k)=\{2\}\,\}\,.\]

To prove that $p$ is a $2$-rainbow dominating function of $G\circ H$
let $(g,h)\in U_{\emptyset}$ (in other words, $(g,h)$ is such that
$f(g,h)=\emptyset$, or $g \in W_1$ and $f(g,h)=\{1\}$). All
possibilities are covered in the following cases.

\begin{itemize}
\item Suppose $^g\!H$ is of Type 1.  As noted above, $g$ has a neighbor $g'\in W_{12}$.
The vertex $(g,h)$ is adjacent to every vertex in $^{g'}\!H$.  By
the definitions of $U_1$, $U_2$, and $U_{12}$ it follows that
$(g,h)$ has a neighbor that (under $p$) contains 1 in its label and
a neighbor that (under $p$) contains 2 in its label.

\item Suppose $^g\!H$ is of  Type 2. Now, $g$ has a neighbor $y\in W_1$ and a neighbor $z \in W_2$.
There exists $h'\in V(H)$ such that $f(z,h')=\{2\}$. By the
definition of $p$, $p(z,h')=\{1,2\}$ and $(g,h)$ is adjacent to
$(z,h')$.

\item Suppose that $^g\!H$ is of Type 3.  By the definition of $p$
the labels in $^g\!H$ under $p$ agree with those under $f$, and by
our assumption about $f$, the vertex $(g,h)$ with the property
$f(g,h)=\emptyset$ has a neighbor in $^g\!H \cap U_1$ and a neighbor
in $^g\!H \cap U_2$.

\end{itemize}

Hence we see that in all of the above,
$\{1,2\}=\bigcup\{\,p(g',h')\,:\,(g',h')\in N(g,h)\}$. It follows
that $p$ is a $2$-rainbow dominating function of $G\circ H$, and by
its definition $\|p\| \le \|f\|$.  Therefore, $\|p\|=\rd(G\circ H)$.

Let
\[A=\{\,x\in V(G)\,:\,  ^x\!H \textup{ contributes 2 to the weight of } p\,\}\,,\quad  {\rm and}\]
\[B=\{\,x\in V(G)\,:\,  ^x\!H \textup{ contributes 3 to the weight of } p\,\}\,.\]

The definition of $p$ shows that $\|p\|=2|A|+3|B|$.  It remains to show
that $(A,B)$ is a dominating couple of $G$.  For this purpose let $g
\in V(G)\setminus B$.  If $g$ does not belong to $A$, then $^g\!H \subseteq
U_{\emptyset}$.  Since $p$ is a $2$-rainbow dominating function it
follows that $g$ has a neighbor in $A \cup B$.  Finally, assume that
$g\in A$.  This means that $^g\!H$ contributes 2 to the weight of
$p$. Since $\rd(H)=3$ there exists at least one vertex $(g,h)\in
U_{\emptyset}$ such that $\{1,2\}\neq \bigcup\{ p(g,k)\,:\, k\in
N_H(h)\}$.  (That is, $(g,h)$ is not $2$-rainbow dominated by $p$ from
within $^g\!H$.)  It follows that $(g,h)$ has a neighbor in some
$^{g'}\!H$ such that $^{g'}\!H \subseteq \left( (A\setminus \{g\}) \cup B
\right) \times V(H)$. Hence $g$ and $g'$ are adjacent in $G$, and
$g'\in A \cup B$.  Therefore, $(A,B)$ is a dominating couple of $G$.
\end{proof}

The factors of the lexicographic product represented in Figure
\ref{potka} satisfy the conditions of the above theorem so this
graph attains the upper bound of Corollary~\ref{cor:gen2bounds}.

We were also able to improve the upper bound from this corollary in
the case of the lexicographic product of paths and graphs $H$ that
do not satisfy the condition on $H$ from Theorem \ref{challenge}.
We would like to point out that the
construction used in the proof of the next proposition enabled us
also to find a family of graphs that attains the lower bound
$2\gamma(G)$.

\begin{prop}\label{prop:G-path}
Let $H$ be a connected graph with $\rd(H)=3$ and the property that
there exists a $2$-rainbow dominating function of $H$ of minimum
weight such that there is a vertex in $H$ labeled with $\{1,2\}$. It
follows that
$$\rd (P_n\circ H)\leq \left\{ \begin{array}{ccc}
           6\lfloor \frac{n}{7} \rfloor +k & , & n \equiv k \pmod{7} \textup{ for } k=0,3,4,5,6,\\
            &&\\
            6\lfloor \frac{n}{7} \rfloor +k+1 & , & n \equiv k \pmod{7} \textup{ for }
            k=1,2.
            \end{array} \right.$$
\end{prop}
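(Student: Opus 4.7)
I would prove the proposition by explicit construction of a $2$-rainbow dominating function of $P_n\circ H$ of the claimed weight.

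\emph{Structural preliminary.}  Fix a minimum $2$-rd function $\varphi$ of $H$ with $\varphi(h^*)=\{1,2\}$ for some $h^*\in V(H)$.  Since $\|\varphi\|=\rd(H)=3$ and $|\varphi(h^*)|=2$, the remaining weight is a single unit, so up to swapping the names $1$ and $2$ we may take $\varphi(h')=\{1\}$ for a unique $h'\ne h^*$, with $\varphi$ zero elsewhere.  The $2$-rd condition on $\varphi$ forces every empty-labelled vertex of $H$ to be adjacent to $h^*$, so $V(H)\setminus\{h^*,h'\}\subseteq N_H(h^*)$.  If in addition $h^*h'\in E(H)$ then the function obtained from $\varphi$ by erasing $\varphi(h')$ is already a $2$-rd function of weight~$2$, contradicting $\rd(H)=3$.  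Hence $h^*h'\notin E(H)$ and $h^*$ is adjacent to every vertex of $H$ except itself and $h'$.  The key consequence for the construction is that a single label $f(v,h^*)=\{1,2\}$ inside a layer dominates every vertex of $^v\!H$ except $(v,h')$.

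\emph{Block pattern.}  Label the vertices of $P_n$ by $v_1,\ldots,v_n$ and write $n=7m+k$ with $0\le k\le 6$.  On each block of seven consecutive vertices the construction places labels of total weight~$6$, built from two elementary units: a \emph{pair} at two consecutive layers $v_i,v_{i+1}$ where I set $f(v_i,h^*)=f(v_{i+1},h^*)=\{1,2\}$ (weight~$4$, dominating the four consecutive layers $v_{i-1},v_i,v_{i+1},v_{i+2}$ completely, because each $(v_i,h')$ is cross-adjacent to the other pair-mate's $(v_{i+1},h^*)=\{1,2\}$), and a \emph{$\varphi$-layer} at some $v_i$ where I copy $\varphi$ to $^{v_i}\!H$ (weight~$3$, internally $2$-rd and externally dominating $v_{i\pm 1}$).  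Within a block of $7$ I combine pairs and $\varphi$-layers so that they dominate all seven layers at total weight $6$; the density saving over the naive density $1$ of each unit comes from letting the coverage of a pair or $\varphi$-layer near a block boundary also take care of the neighbouring block's first or last layer.

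\emph{Residue adjustments.}  For $k=0$ the block pattern completes naturally.  For $k=3,4,5,6$ the residual $k$ layers can be dominated using a single $\varphi$-layer, a pair, or a pair together with one extra $\{1,2\}$-label, of total weight $k$.  For $k=1,2$ the residual subpath at the endpoint of $P_n$ is too short to accommodate a pair without wasted coverage outside $P_n$, so one additional unit of weight is required; this is the source of the $+1$ correction in the formula.  In each case I would specify the exact labels placed on the last $k$ layers and on the final block.

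\emph{Verification.}  For each vertex $(v,h)$ with $f(v,h)=\emptyset$ I would check that the union of labels on its neighbours in $P_n\circ H$ contains $\{1,2\}$.  An empty layer $^v\!H$ is dominated by an adjacent labelled layer's $(v',h^*)=\{1,2\}$ label via the complete bipartite join between $^v\!H$ and $^{v'}\!H$.  Inside a labelled layer, every empty $(v,h)$ with $h\ne h^*,h'$ is dominated by the internal $(v,h^*)=\{1,2\}$ (since $h\sim h^*$); and $(v,h')$, which is not adjacent to $(v,h^*)$ inside the layer because $h'\not\sim h^*$, is dominated either by its pair-mate layer $(v',h^*)=\{1,2\}$, or (in a $\varphi$-layer) is labelled $\{1\}$ itself, with the complementary label $\{2\}$ supplied by an adjacent layer's external contribution.

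\emph{Main obstacle.}  The crucial step is the design of the block pattern: each elementary unit has density $1$, so achieving the block density $6/7<1$ requires a nontrivial overlap of coverage across consecutive blocks, and the exact arrangement must be worked out so that every $(v,h')$ is accounted for.  The other delicate point is proving the residue cases $k=1,2$ indeed need the extra $+1$ and cannot be reduced further within the construction.
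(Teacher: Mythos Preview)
Your structural preliminary is correct and useful: the hypothesis on $H$ does force a vertex $h^*$ adjacent to every vertex of $H$ except a single vertex $h'$, and this is exactly what the construction exploits. The overall plan of exhibiting an explicit $2$-rainbow dominating function of the claimed weight is also the right one, and your ``pair'' and ``$\varphi$-layer'' units are genuine building blocks---the paper in fact uses them for some of the small residues (its $R_4$ is your pair, and $R_3,R_6$ are built from $\varphi$-layers).

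However, there is a genuine gap at the heart of the argument. Your pair has weight $4$ and covers exactly $4$ consecutive $H$-layers; your $\varphi$-layer has weight $3$ and covers exactly $3$ consecutive layers. Both units have density exactly $1$ (weight per layer covered), and no arrangement of density-$1$ units---with or without overlap across block boundaries---can cover $7m$ layers with total weight $6m$. Overlap of coverage can only \emph{raise} the effective density; it cannot push it below~$1$. So the period-$7$ block of weight $6$ that you promise cannot be built from these two units alone, and the sentence ``the density saving \ldots\ comes from letting the coverage \ldots\ also take care of the neighbouring block's first or last layer'' does not hold up.

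The paper achieves the $6/7$ density by a different mechanism. In its period-$7$ block $R_7$ no layer carries the label $\{1,2\}$ at all: instead two layers receive weight~$2$ by placing complementary singletons $\{1\},\{2\}$ at $(v_i,h^*)$ and $(v_i,h')$, and two other layers receive weight~$1$ via a single singleton at $(v_i,h^*)$. A weight-$2$ layer of this kind is \emph{not} internally $2$-rainbow dominating (a generic $(v_i,h)$ need not be adjacent to $(v_i,h')$), and a weight-$1$ layer certainly is not; but an adjacent labelled layer supplies the missing colour through the join. Concretely, the labelled columns in $R_7$ are $2,3,5,6$ with weights $2,1,1,2$, the columns $1,4,7$ are empty, and one checks directly that this weight-$6$ pattern $2$-rainbow dominates all seven layers. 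The saving comes precisely from allowing layers that carry only a single unit of weight---a unit type absent from your toolkit.

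A minor remark: the proposition asserts only an upper bound, so there is no need to argue that the extra $+1$ in the residues $k=1,2$ is unavoidable; you only have to exhibit functions of the stated weight.
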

\begin{proof}
Let $H$ be a connected graph with $\rd(H)=3$ and suppose there
exists a $2$-rainbow dominating function $f$ of $H$ of minimum weight
such that $V_{12} \neq \emptyset$. Let $u,v\in V(H)$ be the
vertices with $f(u)=\{1,2\}$ and (without loss of generality)
$f(v)=\{1\}$.

To end the proof it suffices to construct a $2$-rainbow dominating
function $p$ on $P_n \circ H$ with desired weight for each case. We
will represent $p$ with a table of integers $0,1,2,3$ where these
numbers denote subsets $\emptyset, \{1\}, \{2\}$ and $\{1,2\}$,
respectively. Numbers in the first and second row correspond to the
values of $p$ in the $^u\!P_n$-layer and $^v\!P_n$-layer,
respectively (we omit other $P_n$-layers, since only zeros appear in
them).

One can check that for each $i=2,3,4,5,6,7,8$, $R_i$ depicted below
represents a $2$-rainbow dominating function on $P_i \circ H$.

\begin{tabular}{ccccccc}

\begin{tabular}[t]{|c|}
\multicolumn{1}{c}{$R_2$}\\\hline
 30\\\hline
 10
\\\hline
\end{tabular} &

\begin{tabular}[t]{|c|}
\multicolumn{1}{c}{$R_3$}\\\hline
 030\\\hline
 010
\\\hline
\end{tabular} &

\begin{tabular}[t]{|c|}
\multicolumn{1}{c}{$R_4$}\\\hline
 0330\\\hline
 0000
\\\hline
\end{tabular} &

\begin{tabular}[t]{|c|}
\multicolumn{1}{c}{$R_5$}\\\hline
 02120\\\hline
 01010
\\\hline
\end{tabular} &

\begin{tabular}[t]{|c|}
\multicolumn{1}{c}{$R_6$}\\\hline
 030030\\\hline
 010010
\\\hline
\end{tabular} &

\begin{tabular}[t]{|c|}
\multicolumn{1}{c}{$R_7$}\\\hline
 0210210\\\hline
 0100020
\\\hline
\end{tabular} &

\begin{tabular}[t]{|c|}
\multicolumn{1}{c}{$R_8$}\\\hline
 02102130\\\hline
 01000200
\\\hline
\end{tabular}

\end{tabular}

\medskip

To construct a $2$-rainbow dominating function of $P_n \circ H$ for
$n\geq 9$ we distinguish three cases.

If $n\equiv 0 \pmod{7}$, i.e $n=7t$ for some integer $t\geq 1$, then
we obtain the table that corresponds to a desired function by taking
$t$ copies of $R_7$.

\begin{tabular}[t]{|c|}
\multicolumn{1}{c}{}\\\hline  $\underbrace{R_7R_7 \ldots R_7}_{t}$
\\\hline
\end{tabular}

If $n=7t+1$ for some $t\geq 1$, then we take $t-1$ copies of $R_7$
and one copy of $R_8$.

\begin{tabular}[t]{|c|c|}
\multicolumn{2}{c}{}\\\hline  $\underbrace{R_7R_7 \ldots R_7}_{t-1}$
&$R_8$
\\\hline
\end{tabular}

For all other cases (when $n=7t+i$, for $t\geq 1$ and $2\leq i \leq
6$), we take $t$ copies of $R_7$ and one copy of $R_i$.

\begin{tabular}[t]{|c|c|}
\multicolumn{2}{c}{}\\\hline  $\underbrace{R_7R_7 \ldots R_7}_{t}$ &
$R_i$
\\\hline
\end{tabular}

Verification that in each case we obtain a $2$-rainbow dominating
function of desired weight is left to the reader.
\end{proof}

As we have seen in the proof, $\rd (P_7 \lex H)=6=2\gamma (P_7)$, so
the lower bound in Corollary~\ref{cor:gen2bounds} is attained. Using
similar ideas we can also construct an infinite family of graphs
that attain this lower bound.

Let $H$ be a connected graph with $\rd(H)=3$ and the property that
there exists a $2$-rainbow dominating function of $H$ of minimum
weight such that $V_{12}\neq \emptyset$. As above, let $u,v\in V(H)$
be the vertices with $f(u)=\{1,2\}$ and, say $f(v)=\{1\}$. Let $G$
be a graph obtained from  $m$ paths isomorphic to $P_6$ and $n$
paths isomorphic to $P_2$ in such way that we glue them together
along a pendant vertex in each path, see Figure~\ref{tree}. In this
figure the tables as above represent the values of a $2$-rainbow
dominating function on $G \lex H$ only in the $G\,^u$-layer (above)
and $G\,^v$-layer (below), since only zeros appear elsewhere. This
construction gives us $\rd (G \lex H)\leq 4m+2$. On the other hand,
one can verify that $\gamma(G)= 2m+1$. Thus, by
Corollary~\ref{cor:gen2bounds}, $\rd (G \lex H)=2 \gamma(G)$.

\begin{figure}[h]
\begin{center}
\includegraphics[height=7cm]{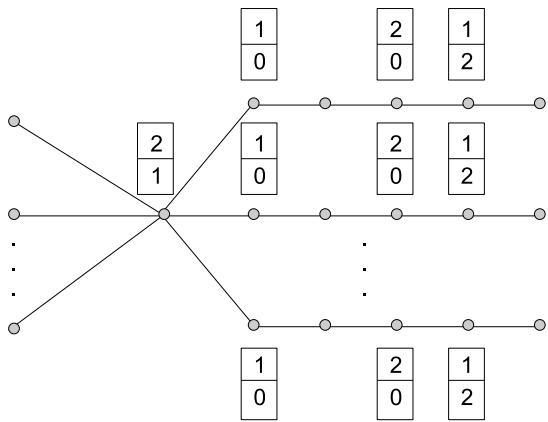}
\caption{$2$-rainbow dominating function of $G \lex H$.\label{tree}}
\end{center}
\end{figure}

\section{Concluding remarks}

By Proposition \ref{prop:G-path}, $\rd(P_5 \lex P_4)\leq 5$, while
the lower and the upper bounds from Corollary \ref{cor:gen2bounds}
are 4 and 6, respectively. In fact, it is a matter of case analysis
to show that $\rd(P_5 \lex P_4)=5$, but it is our conjecture that
the bound from Proposition \ref{prop:G-path} is actually the exact
value.

More generally, it remains an open problem to find the formula for
$\rd(G \lex H)$ in the case when $\rd(H)=3$ and there exists a
minimum $2$-rainbow dominating function of $H$ such that there is a
vertex in $H$ with the label $\{1,2\}$.

\section{Acknowledgements}

We thank the anonymous referees for a very careful reading of our 
manuscript and for a number of suggestions that helped to clarify the presentation.

\bigskip


\begin{thebibliography}{99}


\bibitem{bhr-2008}
B.~Bre\v sar, M.~A.~Henning, and D. F. Rall, Rainbow domination in
graphs, {\it Taiwanese J. Math.} {\bf 12}, (2008) 213--225.

\bibitem{bks-2007}
B.~Bre\v sar and T.~Kraner \v{S}umenjak, On the  $2$-rainbow
domination in graphs, {\it Discrete Appl. Math.},{\bf 155}, (2007)
2394--2400.

\bibitem{CWZ-2010}
G. J. Chang, J. Wu, X. Zhu, Rainbow domination on trees, {\it
Discrete Appl. Math.},{\bf 158}, (2010) 8--12.

\bibitem{DGHM-2006}
M.~Dorfling, W.~Goddard, M.~A.~Henning, and C.~M.~Mynhardt,
Construction of trees and graphs with equal domination parameters,
{\it Discrete Math.}, {\bf 306}, (2006) 2647--2654.

\bibitem{hik-2011}
R.~Hammack, W.~Imrich, and S.~Klav\v{z}ar, \emph{Handbook of Product
Graphs: Second Edition}, CRC Press, Boca Raton, FL, 2011.

\bibitem{hr-2004}
B.~Hartnell and D.~F.~Rall, On dominating the Cartesian product of a
graph and $K_2$, {\it Discuss. Math. Graph Theory} {\bf 24} (2004),
389--402.

\bibitem{kpt-2011}
T.~Kraner \v{S}umenjak, P.~Pavli\v{c} and
A.~Tepeh, On the Roman domination in the lexicographic product
graphs, {\it Discrete Appl. Math.}, {\bf 160} (2012), 2030--2036.



\end{thebibliography}
\end{document}